\documentclass[12pt,reqno]{amsart}

\usepackage{amsthm, mathrsfs,amssymb,amsmath}
\usepackage{enumerate}
\usepackage[hidelinks]{hyperref}
\usepackage{xcolor}
\hypersetup{
	colorlinks,
	linkcolor={red!50!black},
	citecolor={green!50!black},
	urlcolor={red!80!black}
}
\makeatletter
\@namedef{subjclassname@2010}{%
	\textup{2010} Mathematics Subject Classification}
\makeatother

\frenchspacing

\setlength{\textheight}{23cm}
\setlength{\textwidth}{16cm}
\setlength{\oddsidemargin}{0cm}
\setlength{\evensidemargin}{0cm}
\setlength{\topmargin}{0cm}


\allowdisplaybreaks

\newtheorem{thm}{Theorem}[section]

\newtheorem{prop}[thm]{Proposition}
\newtheorem{cor}[thm]{Corollary}

\theoremstyle{definition}
\newtheorem{defn}[thm]{Definition} 

\theoremstyle{remark}
\newtheorem{rem}{Remark}

\newcommand{\spt}{\textnormal{supp}}

\title[Hardy-Littlewood inequality and $L^p$-$L^q$ Fourier multipliers]{Hardy-Littlewood inequality and $L^p$-$L^q$ Fourier multipliers on compact hypergroups}
\author{Vishvesh Kumar}
\address{Vishvesh Kumar \endgraf
	Department of Mathematics: Analysis, Logic and Discrete Mathematics
	\endgraf
	Ghent University, Belgium}
\endgraf
\email{vishveshmishra@gmail.com}
\author[Michael Ruzhansky]{Michael Ruzhansky}
\address{
	Michael Ruzhansky:
	\endgraf
	Department of Mathematics: Analysis, Logic and Discrete Mathematics
	\endgraf
	Ghent University, Belgium
	\endgraf
	and
	\endgraf
	School of Mathematics
	\endgraf
	Queen Mary University of London
	\endgraf
	United Kingdom
	\endgraf
	{\it E-mail address} {\rm michael.ruzhansky@ugent.be}
}

\begin{document}
	
	\begin{abstract} This paper deals with the inequalities devoted to the comparison between the norm of a function on a compact hypergroup and the norm of its Fourier coefficients. We prove the classical Paley inequality in the setting of compact hypergroups which further gives the Hardy-Littlewood and Hausdorff-Young-Paley (Pitt) inequalities in the noncommutative context. We establish H\"ormander's $L^p$-$L^q$ Fourier multiplier theorem on compact hypergroups for $1<p \leq 2 \leq q<\infty$ as an application of Hausdorff-Young-Paley inequality. We examine our results for the hypergroups constructed from the conjugacy classes of compact Lie groups and for a class of countable compact hypergroups.       
	\end{abstract}
	\keywords{Paley inequality; Hardy-Littlewood inequality; Hausdorff-Paley inequality; Compact hypergroups; Conjugacy classes of compact Lie groups; Fourier multipliers; $L^p$-$L^q$ boundedness; compact countable hypergroups}
	\subjclass[2010]{Primary 43A62, 43A22 Secondary 33C45, 43A90}
	\maketitle
	\tableofcontents 
	
	\section{Introduction}  
	\noindent The inequalities which involve functions and their Fourier coefficients played a pivotal role in Fourier analysis as well as in its applications to several different areas. This paper  contributes to some of the classical inequalities of this nature, namely, Hardy-Littlewood inequality, Paley inequality and Hausdorff-Young-Paley inequality, and their applications to the theory  of Fourier multiplier in the non-commutative setting. The first inequality we consider is the  Hardy-Littlewood inequality  proved by Hardy and Littlewood for the torus $ \mathbb{T}$ (\cite{Hardy}).  They proved that for each $ 1 \leq p \leq 2$ there exist a constant $C_p>0$ such that 
	\begin{equation}
	\left( \sum_{n \in \mathbb{Z}} |\widehat{f}(n)|^p\, (1+|n|)^{p-2}   \right)^{\frac{1}{p}} \leq C_p \|f\|_{L^p(\mathbb{T})},\,\quad f \in L^p(\mathbb{T}).
	\end{equation} 
	Hewitt and Ross \cite{HR74} extended this inequality for compact abelian groups using the structure theory of groups. Recently, the second author with his coauthors explored the non-commmutative version of the  Hardy-Littlewood inequality in the setting of compact homogeneous spaces \cite{ARN1,ARN} and compact quantum groups \cite{AMR} (see also \cite{Youn}). The Hardy-Littlewood inequality also has an application to Sobolev embedding theorems and to the  boundedness of Fourier multipliers \cite{Youn,Benyi, ARN}. Compact Riemannian spaces can be viewed as homogeneous spaces of compact Lie groups. It is well-known that the   spherical analysis on  Riemannian symmetric spaces is interconnected with the analysis on  the double coset spaces which are special examples of hypergroups for which a convolution structure can be defined on the space of all bounded Borel measures. Our goal is to investigate Hardy-Littlewood, Paley and Hausdorff-Young-Paley inequalities and their applications to the boundedness of Fourier multipliers in the context of compact hypergroups. The results of this paper are not only applicable to compact double coset spaces but also to the large class of other examples, for instance, the space of group orbits, space of conjugacy classes of compact (Lie) groups and countable compact hypergroups \cite{Bloom}. In particular, the results of this paper are also true for several interesting examples including Jacobi hypergroups with Jacobi polynomials as characters \cite{Gasper}, compact hypergroup structure on the fundamental alcove with Heckman-Opdam polynomials as characters \cite{HRos} and multivariant disk hypergroups \cite{RV, AnT} 
	
	Hewitt and Ross \cite{HR74} used structure theory of compact abelian groups and in \cite{ARN}, the authors used the eigenvalue counting formula for Laplace operator on compact manifolds to derive Hardy-Littlewood inequality. When working with compact hypergroups, we do not have such luxury. In this case,  we obtain the following Hardy-Littlewood inequality. 
	\begin{thm} \label{HLGintro}
		Let $1 <p \leq 2$ and let $K$ be a compact hypergroup. Assume that a sequence $\{\mu_\pi \}_{\pi \in \widehat{K}}$ grows sufficiently fast, that is, 
		\begin{equation} 
		\sum_{\pi \in \widehat{K}} \frac{k_\pi^2}{|\mu_\pi|^\beta}<\infty\,\,\,\, \text{for some}\,\beta \geq 0.
		\end{equation}  Then we have 
		\begin{equation} 
		\sum_{\pi \in \widehat{K}} k_\pi^2 |\mu_\pi|^{\beta (p-2)} \left( \frac{\|\widehat{f}(\pi)\|_{\textnormal{HS}}}{\sqrt{k_\pi}} \right)^p \lesssim \|f\|_{L^p(K)}.
		\end{equation}
	\end{thm}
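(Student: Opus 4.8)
The plan is to obtain Theorem~\ref{HLGintro} as a special case of a Paley-type inequality on $K$ proved by real interpolation. Set $\varphi(\pi):=|\mu_\pi|^{-\beta}$ for $\pi\in\widehat K$. Chebyshev's inequality together with the growth hypothesis gives
\[
M_\varphi:=\sup_{t>0}\ t\!\!\!\sum_{\substack{\pi\in\widehat K\\ \varphi(\pi)\ge t}}\!\!\! k_\pi^2\ \le\ \sum_{\pi\in\widehat K}k_\pi^2\,\varphi(\pi)\ =\ \sum_{\pi\in\widehat K}\frac{k_\pi^2}{|\mu_\pi|^{\beta}}\ <\ \infty ,
\]
and since $\varphi(\pi)^{2-p}=|\mu_\pi|^{\beta(p-2)}$, the asserted inequality (whose right-hand side must be read as $\|f\|_{L^p(K)}^p$, by homogeneity) is exactly the Paley inequality
\[
\left(\sum_{\pi\in\widehat K}k_\pi^2\,\varphi(\pi)^{2-p}\left(\frac{\|\widehat f(\pi)\|_{\textnormal{HS}}}{\sqrt{k_\pi}}\right)^{p}\right)^{1/p}\ \lesssim\ M_\varphi^{(2-p)/p}\,\|f\|_{L^p(K)}
\]
for an arbitrary positive weight $\varphi$ on $\widehat K$ with $M_\varphi<\infty$. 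It therefore suffices to prove the latter.

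To this end, equip the discrete set $\widehat K$ with the measure $\nu(\{\pi\})=k_\pi^2\,\varphi(\pi)^2$ and introduce the sublinear operator $(Tf)(\pi):=\|\widehat f(\pi)\|_{\textnormal{HS}}/(\sqrt{k_\pi}\,\varphi(\pi))$ on $\widehat K$. A direct computation gives $\|Tf\|_{L^p(\widehat K,\nu)}^p=\sum_{\pi}k_\pi^2\varphi(\pi)^{2-p}(\|\widehat f(\pi)\|_{\textnormal{HS}}/\sqrt{k_\pi})^p$, so the Paley inequality is equivalent to the bound $\|T\|_{L^p(K)\to L^p(\widehat K,\nu)}\lesssim M_\varphi^{(2-p)/p}$, which I would establish by Marcinkiewicz interpolation between $p=1$ and $p=2$. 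The $L^2$-endpoint is immediate from the Plancherel formula $\|f\|_{L^2(K)}^2=\sum_{\pi}k_\pi\|\widehat f(\pi)\|_{\textnormal{HS}}^2$, which yields $\|Tf\|_{L^2(\widehat K,\nu)}=\|f\|_{L^2(K)}$. For the $L^1$-endpoint, since the matrix coefficients of irreducible representations of a compact hypergroup are bounded and $d_\pi\le k_\pi$, one has $\|\widehat f(\pi)\|_{\textnormal{HS}}\le\sqrt{d_\pi}\,\|\widehat f(\pi)\|_{\textnormal{op}}\lesssim\sqrt{d_\pi}\,\|f\|_{L^1(K)}\le\sqrt{k_\pi}\,\|f\|_{L^1(K)}$, so $(Tf)(\pi)\lesssim\|f\|_{L^1(K)}/\varphi(\pi)$; then, using $\varphi(\pi)^2=2\int_0^{\varphi(\pi)}r\,dr$ and $\sum_{\varphi(\pi)>r}k_\pi^2\le M_\varphi/r$,
\[
\nu\{\pi:|(Tf)(\pi)|>s\}\ \lesssim\!\!\sum_{\varphi(\pi)<\|f\|_{L^1}/s}\!\! k_\pi^2\varphi(\pi)^2\ =\ 2\!\int_0^{\|f\|_{L^1}/s}\!\! r\!\!\!\sum_{r<\varphi(\pi)<\|f\|_{L^1}/s}\!\!\! k_\pi^2\,dr\ \lesssim\ \frac{M_\varphi\,\|f\|_{L^1(K)}}{s},
\]
so $T$ is of weak type $(1,1)$ with constant $\lesssim M_\varphi$. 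Marcinkiewicz interpolation with $\tfrac1p=(1-\theta)+\tfrac\theta2$, i.e.\ $1-\theta=\tfrac{2-p}{p}$, then gives $\|T\|_{L^p(K)\to L^p(\widehat K,\nu)}\lesssim M_\varphi^{(2-p)/p}$; unwinding the definition of $T$, raising to the power $p$, and taking $\varphi(\pi)=|\mu_\pi|^{-\beta}$ produces the theorem.

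The routine ingredients here are the Plancherel identity, the layer-cake manipulation and the interpolation bookkeeping. The step that needs genuine care in the hypergroup category --- where, unlike for compact homogeneous spaces, there is no Weyl law or Lie-theoretic structure to fall back on --- is the $L^1$-endpoint: one must work throughout with the correct Plancherel weights $k_\pi$, which are determined by the hypergroup orthogonality relations $\int_K\pi_{ij}\,\overline{\pi_{k\ell}}\,d\omega=k_\pi^{-1}\delta_{ik}\delta_{j\ell}$ and need not coincide with the representation dimension $d_\pi$, and one must check both that $k_\pi$ is the weight appearing in the Plancherel formula and that the elementary bound $\|\widehat f(\pi)\|_{\textnormal{HS}}\lesssim\sqrt{k_\pi}\,\|f\|_{L^1(K)}$ survives with this normalization (the inequality $d_\pi\le k_\pi$ and the boundedness of the matrix coefficients of irreducible representations of a compact hypergroup are exactly what is needed here). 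Once these hypergroup-specific facts are secured with honest constants, the remainder is the standard H\"ormander--Paley scheme.
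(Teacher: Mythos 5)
Your proposal is correct and follows essentially the same route as the paper: the theorem is reduced to the Paley inequality by taking $\varphi(\pi)=|\mu_\pi|^{-\beta}$ and checking $M_\varphi\le\sum_\pi k_\pi^2|\mu_\pi|^{-\beta}<\infty$, and the Paley inequality itself is proved exactly as in the paper, via the operator $Tf(\pi)=\|\widehat f(\pi)\|_{\textnormal{HS}}/(\sqrt{k_\pi}\,\varphi(\pi))$ on $(\widehat K,\nu)$ with $\nu(\{\pi\})=k_\pi^2\varphi(\pi)^2$, the Plancherel $L^2$-endpoint, the weak $(1,1)$ endpoint from $\|\widehat f(\pi)\|_{\textnormal{HS}}\le\sqrt{k_\pi}\|f\|_{L^1(K)}$ plus the layer-cake estimate, and Marcinkiewicz interpolation. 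Your remark about reading the right-hand side with the correct homogeneity is a fair observation but does not change the argument.
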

	
	In the case when $K$ is the hypergroup of conjugacy classes of compact Lie group $\text{SU(2)}$ then Theorem \ref{HLGintro} gives the following Hardy-Littlewood inequality the commutative hypergroup $\textnormal{Conj(SU)(2)},$ which is a natural analogue of Hardy-Littlewood inequality for  $\mathbb{T}.$  
	\begin{thm} \label{HLineqintro}
		If $1 < p \leq 2$ and $f \in L^p(\textnormal{Conj(SU)(2)}),$ then we have 
		\begin{equation} \label{in4}
		\sum_{l \in \frac{1}{2}\mathbb{N}_0} (2l+1)^{5p-8} |\widehat{f}(l)|^{p} \leq C_p \|f\|_{L^p(\textnormal{Conj(SU)(2)})}.
		\end{equation}
	\end{thm}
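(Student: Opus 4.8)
The plan is to obtain \eqref{in4} as a direct specialization of Theorem \ref{HLGintro}, so that essentially all the work goes into writing down the Fourier-analytic data of the hypergroup $\textnormal{Conj(SU)(2)}$. First I would recall that its underlying space is the set of conjugacy classes of $\mathrm{SU}(2)$, which one parametrizes by the angle $\theta\in[0,\pi]$, with normalized Haar measure $\tfrac{2}{\pi}\sin^{2}\theta\,d\theta$ coming from the Weyl integration formula; that this hypergroup is commutative, so that each $\pi\in\widehat{K}$ is one-dimensional and $\widehat f(\pi)$ is a scalar with $\|\widehat f(\pi)\|_{\mathrm{HS}}=|\widehat f(\pi)|$; and that $\widehat{K}$ is identified with $\tfrac{1}{2}\mathbb{N}_0$, the spin $l$ corresponding to the character $\phi_l=\chi_l/(2l+1)$, where $\chi_l(\theta)=\sin((2l+1)\theta)/\sin\theta$ is the $\mathrm{SU}(2)$ character of the $(2l+1)$-dimensional irreducible representation. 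The only step requiring a computation is the value of the Plancherel weight $k_l$: the characters $\{\chi_l=(2l+1)\phi_l\}_l$ form an orthonormal basis of $L^2(\textnormal{Conj(SU)(2)})$ by the orthogonality relations for irreducible characters, hence $\|\phi_l\|_{L^2}^2=(2l+1)^{-2}$, and so the Plancherel identity reads $\|f\|_{L^2}^2=\sum_l (2l+1)^2|\widehat f(l)|^2$; that is, $k_l=(2l+1)^2$.

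With this in place I would apply Theorem \ref{HLGintro} to $K=\textnormal{Conj(SU)(2)}$ with the auxiliary sequence $\mu_l:=2l+1$ and exponent $\beta:=6$. The growth hypothesis is satisfied, since
\[
\sum_{l\in\frac{1}{2}\mathbb{N}_0}\frac{k_l^{2}}{|\mu_l|^{6}}=\sum_{l\in\frac{1}{2}\mathbb{N}_0}(2l+1)^{4-6}=\sum_{n=1}^{\infty}n^{-2}<\infty .
\]
Theorem \ref{HLGintro} then yields
\[
\sum_{l\in\frac{1}{2}\mathbb{N}_0}(2l+1)^{4}\,(2l+1)^{6(p-2)}\left(\frac{|\widehat f(l)|}{2l+1}\right)^{p}\lesssim\|f\|_{L^{p}(\textnormal{Conj(SU)(2)})},
\]
and collecting the powers of $2l+1$ on the left-hand side produces the exponent $4+6(p-2)-p=5p-8$, which is precisely \eqref{in4}, with $C_p$ the constant furnished by Theorem \ref{HLGintro}.

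Since the heart of the argument is this substitution together with a count of exponents, the only genuine obstacle is keeping the normalizations straight: one must use the hypergroup Fourier transform defined through the \emph{normalized} characters $\phi_l=\chi_l/(2l+1)$, and it is this normalization that forces the Plancherel weight $k_l=(2l+1)^2$, in contrast with the circle, where every character has modulus one and $k\equiv 1$. Once $k_l=(2l+1)^2$ is fixed, matching exponents shows that for a choice $\mu_l=(2l+1)^{a}$ one needs $\beta=6/a$; the growth hypothesis then amounts to $a\beta=6>5$ and the conclusion is again \eqref{in4}, so one could equally take $\mu_l=(2l+1)^2$ with $\beta=3$. This exhibits the flexibility in the choice of $\{\mu_\pi\}$ allowed by Theorem \ref{HLGintro}.
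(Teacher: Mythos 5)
Your proposal is correct and follows essentially the same route as the paper: the paper's proof applies its abelian Hardy--Littlewood theorem to $\textnormal{Conj(SU)(2)}$ with $k_l=(2l+1)^2$, $\mu_l=(2l+1)^2$ and $\beta=3$, verifying the summability condition via $\sum_l (2l+1)^{-2}<\infty$ and matching exponents to get $5p-8$. Your choice $\mu_l=2l+1$, $\beta=6$ is, as you note yourself, the same weight $|\mu_l|^{\beta}=(2l+1)^6$, so the arguments coincide.
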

The inequality \eqref{in4} can be interpreted in the following form similar to the Hardy-Littlewood inequality on $\mathbb{T}$: 
\begin{equation} \label{in5}
\sum_{l \in \frac{1}{2}\mathbb{N}_0} (2l+1)^{5(p-2)}  (2l+1)^2 |\widehat{f}(l)|^{p} \leq C_p \|f\|_{L^p(\textnormal{Conj(SU)(2)})}.
\end{equation}
In contrast to the case of $\mathbb{T},$ an extra term $(2l+1)^2$ appears in the above Inequality \eqref{in5}. But this is natural as the Plancherel measure $\omega$ on $\frac{1}{2}\mathbb{N}_0,$ the dual of $\textnormal{Conj(SU)(2)},$ is given by $\omega(l)= (2l+1)^{2}$ for $l \in \frac{1}{2} \mathbb{N}_0$ while for $\mathbb{T},$ the Plancherel  measure of the dual group $\mathbb{Z}$ is the counting measure. 
	\begin{cor} \label{corhl}
		If  $2 \leq p <\infty$ and $\sum_{l \in \frac{1}{2}\mathbb{N}_0} (2l+1)^{5p-8} |\widehat{f}(l)|^{p}<\infty$ then $$f \in L^p(\textnormal{Conj(SU)(2)}).$$ Moreover, we have 
		$$\|f\|_{L^p(\textnormal{Conj(SU)(2)})} \leq C_p \sum_{l \in \frac{1}{2}\mathbb{N}_0} (2l+1)^{5p-8} |\widehat{f}(l)|^{p}.$$
	\end{cor}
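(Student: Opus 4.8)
The plan is to obtain Corollary \ref{corhl} as the dual statement of Theorem \ref{HLineqintro}, via the standard duality argument for such ``reverse'' Hardy--Littlewood inequalities. Since $\textnormal{Conj(SU)(2)}$ is a compact commutative hypergroup its Haar measure is finite, so $L^{p}(\textnormal{Conj(SU)(2)})=\big(L^{p'}(\textnormal{Conj(SU)(2)})\big)^{*}$ for $2\leq p<\infty$, where $p'\in(1,2]$ is the conjugate exponent; thus it suffices to show that the Fourier pairing of $f$ against $L^{p'}$ is bounded by the right-hand side of the asserted inequality. As a preliminary remark, the hypothesis already forces $f\in L^{2}(\textnormal{Conj(SU)(2)})$: H\"older's inequality with the conjugate exponents $p/2$ and $p/(p-2)$ bounds $\sum_{l}(2l+1)^{2}|\widehat f(l)|^{2}$ by $\big(\sum_{l}(2l+1)^{5p-8}|\widehat f(l)|^{p}\big)^{2/p}\big(\sum_{l}(2l+1)^{-8}\big)^{(p-2)/p}$, and $\sum_{l\in\frac12\mathbb{N}_0}(2l+1)^{-8}<\infty$, so by the Plancherel theorem $\|f\|_{L^{2}}^{2}=\sum_{l}(2l+1)^{2}|\widehat f(l)|^{2}<\infty$; in particular $f$ is a genuine integrable function, determined by its Fourier coefficients.

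The core of the argument is the correct splitting of the Plancherel weight $\omega(l)=(2l+1)^{2}$. For a trigonometric polynomial $g$ on $\textnormal{Conj(SU)(2)}$, Parseval's identity gives $\int_{\textnormal{Conj(SU)(2)}}f\,\overline{g}=\sum_{l\in\frac12\mathbb{N}_0}(2l+1)^{2}\,\widehat f(l)\,\overline{\widehat g(l)}$. Using the elementary identity $\tfrac{5p-8}{p}+\tfrac{5p'-8}{p'}=10-8\big(\tfrac1p+\tfrac1{p'}\big)=2$, one writes $(2l+1)^{2}=(2l+1)^{(5p-8)/p}\,(2l+1)^{(5p'-8)/p'}$ and applies H\"older's inequality with exponents $p$ and $p'$ to obtain
\[
\Big|\int_{\textnormal{Conj(SU)(2)}}f\,\overline{g}\,\Big|\leq\Big(\sum_{l}(2l+1)^{5p-8}|\widehat f(l)|^{p}\Big)^{1/p}\Big(\sum_{l}(2l+1)^{5p'-8}|\widehat g(l)|^{p'}\Big)^{1/p'}.
\]
By Theorem \ref{HLineqintro} applied with the exponent $p'\in(1,2]$, the last factor is controlled by a constant multiple of $\|g\|_{L^{p'}(\textnormal{Conj(SU)(2)})}$.

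Taking the supremum over all trigonometric polynomials $g$ with $\|g\|_{L^{p'}}\leq 1$, and using that such $g$ are dense in $L^{p'}(\textnormal{Conj(SU)(2)})$ together with $\big(L^{p'}\big)^{*}=L^{p}$, one concludes that $f$ represents a bounded functional on $L^{p'}$ of norm at most a constant times $\big(\sum_{l}(2l+1)^{5p-8}|\widehat f(l)|^{p}\big)^{1/p}$, hence $f\in L^{p}(\textnormal{Conj(SU)(2)})$ and, upon raising to the $p$-th power, the stated estimate follows (the $L^{p}$ representative agrees with $f$ because they have the same Fourier coefficients). I expect the only slightly delicate points to be the bookkeeping needed to make sense of $f$ from its Fourier data and the precise justification of the duality and density steps in the hypergroup setting, together with recalling the normalization $\omega(l)=(2l+1)^{2}$ of the Plancherel measure on the dual $\frac12\mathbb{N}_0$; the manipulation of the weights is essentially forced and closes precisely because $\tfrac1p+\tfrac1{p'}=1$.
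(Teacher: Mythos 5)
Your proposal is correct and follows essentially the same route as the paper's own proof: $L^{p}$--$L^{p'}$ duality, the Plancherel pairing $\int f\overline{g}=\sum_{l}(2l+1)^{2}\widehat f(l)\overline{\widehat g(l)}$, the splitting $(2l+1)^{2}=(2l+1)^{(5p-8)/p}(2l+1)^{(5p'-8)/p'}$ with H\"older in $(p,p')$, and an application of Theorem \ref{HLineqintro} at the exponent $p'$ to the $g$-factor. Your preliminary $L^{2}$ observation and density remarks are harmless additions (and note both your argument and the paper's actually yield the estimate with $\bigl(\sum_{l}(2l+1)^{5p-8}|\widehat f(l)|^{p}\bigr)^{1/p}$ on the right-hand side, which is the homogeneously correct form of the stated inequality).
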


For $p=2,$ Theorem \ref{HLineqintro} and Corollary \ref{corhl} boil down to the  Plancherel theorem for the hypergroup $\textnormal{Conj(SU)(2)}.$ Therefore, these follow the philosophy of Hardy and Littlewood \cite{Hardy} who argue that Hardy-Littlewood inequality is a suitable extension of the Plancherel theorem in the case of $\mathbb{T}.$ 
 
 Another set of interesting examples of the commutative infinite hypergroups which we will investigate is the family of countable compact hypergroups studied by Dunkl and Ramirez \cite{dun2}. Recently, in \cite{KKA,KKAadd} first author with Singh and Ross studied classification results of such classes of hypergroups arising from the discrete semigroups  with applications to Ramsey theory \cite{KumarRamsey}. Interestingly, the property of being countable infinite and compact simultaneously  is a purely hypergroups theoretical property as any infinite compact group can never be countable. We also obtain the following analogue of  the Hardy-Littlewood inequality for this class of hypergroups $H_a$. 
 
 	The Hardy-Littlewood inequality is obtained by  the following Paley-type inequality for compact hypergroups. 
	\begin{thm}  Let $K$ be a compact hypergroup and let $1<p \leq 2.$ If $\varphi(\pi)$ is a positive sequence over $\widehat{K}$ such that the quantity
		\begin{equation} 
		M_\varphi:= \sup_{y>0} y \sum_{\overset{\pi \in \widehat{K}}{\varphi(\pi) \geq y}} k_\pi^2 
		\end{equation} is finite, then we have 
		\begin{equation} 
		\left(\sum_{\pi \in \widehat{K}} k_\pi^2 \left( \frac{\|\widehat{f}(\pi)\|_{\textnormal{HS}}}{\sqrt{k_\pi}} \right)^p \varphi(\pi)^{2-p} \right)^{\frac{1}{p}} \lesssim M_\varphi^{\frac{2-p}{p}} \|f\|_{L^p(K)}. 
		\end{equation}
	\end{thm}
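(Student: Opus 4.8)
The plan is to run the classical Paley argument via interpolation against a suitably weighted counting measure on the dual $\widehat{K}$. First I would set up the measure-space framework: on the discrete set $\widehat{K}$, equipped with the Plancherel weights $k_\pi^2$, introduce the measure $\nu$ defined by $\nu(\{\pi\}) = k_\pi^2\,\varphi(\pi)$. The finiteness hypothesis $M_\varphi = \sup_{y>0} y\sum_{\varphi(\pi)\ge y} k_\pi^2 < \infty$ is exactly the statement that the "distribution function" of $\varphi$ with respect to the Plancherel measure decays like $M_\varphi/y$; equivalently, the operator sending $f$ to the sequence $\bigl(\|\widehat{f}(\pi)\|_{\mathrm{HS}}/\sqrt{k_\pi}\bigr)_{\pi}$ maps into a Lorentz-type space relative to $\nu$. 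Concretely, I would consider the sublinear operator
\begin{equation}
(Af)(\pi) := \frac{\|\widehat{f}(\pi)\|_{\mathrm{HS}}}{\sqrt{k_\pi}\,\varphi(\pi)}
\end{equation}
and aim to show it is weak-type $(1,1)$ and strong-type $(2,2)$ as a map $L^p(K) \to L^{p,\ast}(\widehat{K},\nu)$ in the appropriate sense, then conclude by Marcinkiewicz interpolation.

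The two endpoint estimates are the heart of the matter. For the $L^2$ endpoint, the Plancherel theorem for compact hypergroups (which should be available from the preliminaries, since the Fourier transform and the quantities $k_\pi$, $\|\widehat{f}(\pi)\|_{\mathrm{HS}}$ have already been introduced) gives $\sum_{\pi} \|\widehat{f}(\pi)\|_{\mathrm{HS}}^2 = \|f\|_{L^2(K)}^2$, and since $\varphi^2$ is exactly the extra weight built into $\nu$ relative to the factor $1/\varphi^2$ in $|Af|^2$, one gets $\int |Af|^2\,d\nu = \sum_\pi k_\pi^2\,\varphi(\pi)\cdot \frac{\|\widehat{f}(\pi)\|_{\mathrm{HS}}^2}{k_\pi\,\varphi(\pi)^2} = \sum_\pi \|\widehat{f}(\pi)\|_{\mathrm{HS}}^2 = \|f\|_{L^2(K)}^2$, so $A$ is bounded (indeed isometric up to constants) from $L^2(K)$ to $L^2(\widehat K,\nu)$. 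For the $L^1$ endpoint I would use the Hausdorff–Young-type bound $\|\widehat{f}(\pi)\|_{\mathrm{HS}}/\sqrt{k_\pi} \le \|\widehat f(\pi)\|_{\mathrm{op}}\cdot\text{(normalizing factor)} \lesssim \|f\|_{L^1(K)}$, i.e. every coordinate of $Af$ is pointwise dominated by $\|f\|_{L^1(K)}/\varphi(\pi)$; then the weak-$L^1$ norm with respect to $\nu$ is controlled by
\begin{equation}
\nu\bigl(\{\pi : |Af(\pi)| > t\}\bigr) \le \nu\Bigl(\bigl\{\pi : \varphi(\pi) < \|f\|_{L^1}/t\bigr\}\Bigr) = \sum_{\varphi(\pi) < \|f\|_{L^1}/t} k_\pi^2\,\varphi(\pi),
\end{equation}
and a dyadic decomposition of the range of $\varphi$ together with the definition of $M_\varphi$ bounds this last sum by $C\,M_\varphi\,\|f\|_{L^1(K)}/t$, giving weak-type $(1,1)$ with constant $\lesssim M_\varphi$.

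With these two endpoints in hand, Marcinkiewicz interpolation (for sublinear operators on the measure space $(\widehat K,\nu)$, with the exponent $p\in(1,2)$) yields $\|Af\|_{L^p(\widehat K,\nu)} \lesssim M_\varphi^{1/p-1/2}\,\|f\|_{L^p(K)}$; unwinding the definitions, $\|Af\|_{L^p(\widehat K,\nu)}^p = \sum_\pi k_\pi^2\,\varphi(\pi)\cdot \frac{\|\widehat f(\pi)\|_{\mathrm{HS}}^p}{k_\pi^{p/2}\,\varphi(\pi)^p} = \sum_\pi k_\pi^2\bigl(\|\widehat f(\pi)\|_{\mathrm{HS}}/\sqrt{k_\pi}\bigr)^p\,\varphi(\pi)^{1-p}$. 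This is not quite the claimed $\varphi(\pi)^{2-p}$; to fix the bookkeeping I would instead take $\nu(\{\pi\}) = k_\pi^2\,\varphi(\pi)^2$ and $Af(\pi) = \|\widehat f(\pi)\|_{\mathrm{HS}}/(\sqrt{k_\pi}\,\varphi(\pi))$ throughout, which leaves the $L^2$ computation unchanged and only rescales the $L^1$ estimate. The main obstacle I anticipate is precisely this matching of weights and constants across the interpolation — making sure the correct power of $M_\varphi$ emerges and that the exponent $(2-p)/p$ appears, which requires carefully tracking how $\varphi$ is distributed in the $L^1$ endpoint estimate; the two endpoint bounds themselves are soft consequences of Plancherel and the trivial $L^1\to\ell^\infty$ bound for the hypergroup Fourier transform.
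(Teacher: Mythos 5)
Your proposal is essentially the paper's own proof: the same sublinear operator $Af(\pi)=\|\widehat f(\pi)\|_{\mathrm{HS}}/(\sqrt{k_\pi}\,\varphi(\pi))$ on the weighted measure $\nu(\{\pi\})=k_\pi^2\varphi(\pi)^2$ (your corrected bookkeeping), the same two endpoints --- weak type $(1,1)$ with constant $\sim M_\varphi$ from the trivial bound $\|\widehat f(\pi)\|_{\mathrm{HS}}\le\sqrt{k_\pi}\,\|f\|_{L^1(K)}$, and type $(2,2)$ from the Plancherel identity --- followed by Marcinkiewicz interpolation, with your dyadic decomposition of $\{\varphi(\pi)\le w\}$ being an equivalent substitute for the paper's layer-cake integral proof of $\sum_{\varphi(\pi)\le w}k_\pi^2\varphi(\pi)^2\lesssim M_\varphi w$. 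Only two bookkeeping slips remain, and both resolve in the standard way: Plancherel carries the weight $k_\pi$, i.e.\ $\sum_\pi k_\pi\|\widehat f(\pi)\|_{\mathrm{HS}}^2=\|f\|_{L^2(K)}^2$ (which your corrected $\nu$ indeed reproduces), and the interpolation constant is $M_\varphi^{(2-p)/p}$ rather than $M_\varphi^{1/p-1/2}$, since $1-\theta=\tfrac{2-p}{p}$ when $\tfrac1p=1-\theta+\tfrac{\theta}{2}$.
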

	
	The Paley-type inequality describes the growth of the Fourier transform of a function in terms of its $L^p$-norm. Interpolating the Paley-inequality with the Hausdorff-Young inequality one can obtain the following H\"ormander's version of the  Hausdorff-Young-Paley inequality,
	\begin{equation}\label{3}
	\left(\int\limits_{\mathbb{R}^n}|(\mathscr{F}f)(\xi)\phi(\xi)^{ \frac{1}{r}-\frac{1}{p'} }|^r\, d \xi \right)^{\frac{1}{r}}\leq \Vert f \Vert_{L^p(\mathbb{R}^n)},\,\,\,1<p\leq r\leq p'<\infty, \,\,1<p<2.
	\end{equation} Also, as a consequence  of the Hausdorff-Young-Paley inequality, H\"ormander \cite[page 106]{Hormander1960} proves that the condition 
	\begin{equation}\label{4}
	\sup_{t>0}t^b\{\xi\in \mathbb{R}^n:m(\xi)\geq t\}<\infty,\quad \frac{1}{p}-\frac{1}{q}=\frac{1}{b},
	\end{equation}where $1<p\leq 2\leq q<\infty,$ implies the existence of a bounded extension of a Fourier multiplier $T_{m}$ with symbol $m$ from $L^p(\mathbb{R}^n)$ to $L^q(\mathbb{R}^n).$ Recently, the second author with his collaborator R. Akylzhanov  extended H\"ormander's classical results to unimodular locally compact groups and to homogeneous spaces \cite{ARN, AR}. In \cite{AR}, the key idea behind the extension of H\"ormander theorem is the reformulation of this theorem as follows: 
	$$\|T_m\|_{L^p(\mathbb{R}^n) \rightarrow L^q(\mathbb{R}^n)} \lesssim \sup_{s>0} s\left( \int_{ \{\xi \in \mathbb{R}^n :\, m(\xi) \geq s\} } d\xi \right)^{\frac{1}{p}-\frac{1}{q}} \simeq \|m\|_{L^{r, \infty}(\mathbb{R}^n)} \simeq \|T_m\|_{L^{r, \infty}(\textnormal{VN}(\mathbb{R}^n))},$$ where $\frac{1}{r}=\frac{1}{p}-\frac{1}{q},$ $\|m\|_{L^{r, \infty}(\mathbb{R}^n)}$ is the Lorentz norm of $m,$ and $\|T_m\|_{L^{r, \infty}(\textnormal{VN}(\mathbb{R}^n))}$ is the norm of the operator $T_m$ in the Lorentz space on the group von Neumann algebra $\textnormal{VN}(\mathbb{R}^n)$ of $\mathbb{R}^n.$ Then one can use the Lorentz spaces and group von Neumann algebra technique for extending it to general locally compact unimodular groups. The unimodularity assumption has its own advantages such as existence of the canonical trace on the group von Neumann algebra and consequently,  Plancherel formula and the Hausdorff-Young inequality. It was also pointed out that the unimodularity can be avoided by using the Tomita-Takesaki modular theory and the Haagerup reduction technique.
	
	By interpolating the Hausdorff-Young inequality and Paley-type inequality we get the following Hausdorff-Young-Paley inequality for compact hypergroups.
	
	\begin{thm}
		Let $K$ be a compact hypergroup and let $1<p \leq b \leq p'<\infty.$ If a positive sequence $\varphi(\pi), \pi \in \widehat{K},$ satisfies the condition 
		\begin{equation}
		M_\varphi:= \sup_{y>0} y \sum_{\overset{\pi \in \widehat{K}}{\varphi(\pi) \geq y}} k_\pi^2 <\infty
		\end{equation}  then we have
		\begin{equation}
		\left(\sum_{\pi \in \widehat{K}} k_\pi^2 \left( \frac{\|\widehat{f}(\pi)\|_{\textnormal{HS}}}{\sqrt{k_\pi}}\varphi(\pi)^{\frac{1}{b}-\frac{1}{p'}} \right)^b  \right)^{\frac{1}{b}} \lesssim M_\varphi^{\frac{1}{b}-\frac{1}{p'}} \|f\|_{L^p(K)}. 
		\end{equation}
	\end{thm}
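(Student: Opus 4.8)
The plan is to obtain this inequality by complex interpolation (the Stein--Weiss interpolation theorem ``with change of measure'') between the Paley-type inequality proved above and the Hausdorff--Young inequality on $K$.

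First I would put the two endpoint estimates in a uniform form. Let $\nu$ be the measure on the discrete set $\widehat{K}$ assigning mass $k_\pi^2$ to each $\{\pi\}$, and let $\mathcal A$ denote the linear map $f\mapsto\bigl(\widehat f(\pi)/\sqrt{k_\pi}\bigr)_{\pi\in\widehat K}$, regarded as taking values in the weighted vector-valued space whose norm of a family $(\sigma_\pi)$ is $\bigl(\sum_{\pi}k_\pi^2(\|\sigma_\pi\|_{\textnormal{HS}})^r\bigr)^{1/r}$. With this notation the Hausdorff--Young inequality on $K$ (valid for $1<p\le 2$) says that $\mathcal A\colon L^p(K)\to L^{p'}(\widehat K,\nu)$ is bounded with norm $\le 1$, while the Paley-type inequality of the preceding theorem says that $\mathcal A\colon L^p(K)\to L^{p}\bigl(\widehat K,\varphi^{2-p}\,d\nu\bigr)$ is bounded with norm $\lesssim M_\varphi^{(2-p)/p}$. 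Note that the domain is $L^p(K)$ with the same measure in both cases, so the interpolated domain will again be $L^p(K)$.

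Next, since $1<p\le b\le p'<\infty$ we may pick $\theta\in[0,1]$ with $\frac1b=\frac{1-\theta}{p}+\frac{\theta}{p'}$, equivalently $1-\theta=\bigl(\tfrac1b-\tfrac1{p'}\bigr)\big/\bigl(\tfrac1p-\tfrac1{p'}\bigr)$; this is precisely where the hypothesis $p\le b\le p'$ is used. Applying Stein--Weiss interpolation (on the common underlying measure $\nu$, with weights $\varphi^{2-p}$ and $1$) then gives boundedness of $\mathcal A\colon L^p(K)\to L^{b}(\widehat K, w\,d\nu)$ with weight $w=(\varphi^{2-p})^{\,b(1-\theta)/p}\cdot 1^{\,b\theta/p'}=\varphi^{\,b(2-p)(1-\theta)/p}$ and with operator norm $\lesssim \bigl(M_\varphi^{(2-p)/p}\bigr)^{1-\theta}\cdot 1^{\theta}=M_\varphi^{(2-p)(1-\theta)/p}$. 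Using the identity $\frac1p-\frac1{p'}=\frac{2-p}{p}$ one simplifies $\frac{(2-p)(1-\theta)}{p}=\frac1b-\frac1{p'}$, so $w=\varphi^{\,b(1/b-1/p')}$ and the norm is $\lesssim M_\varphi^{\,1/b-1/p'}$; writing out $\|\mathcal A f\|_{L^b(w\,d\nu)}$ and raising to the power $1/b$ gives exactly the asserted inequality.

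The only genuinely delicate step is the legitimacy of invoking a scalar weighted interpolation theorem for the Hilbert--Schmidt-operator-valued Fourier transform. I would handle this by using the vector-valued (Banach-space-valued) form of the Stein--Weiss / Riesz--Thorin theorem, applied to the honest linear map $f\mapsto\widehat f$ with values in the direct sum of the spaces $\textnormal{HS}(\mathcal H_\pi)$ and with the measure $\nu$ on $\widehat K$; the quantities appearing in all three inequalities depend on $f$ only through $\pi\mapsto\|\widehat f(\pi)\|_{\textnormal{HS}}/\sqrt{k_\pi}$, so no further difficulty arises. Beyond this, everything is exponent bookkeeping, which is carried out above; in particular $M_\varphi<\infty$ is exactly what guarantees that the Paley endpoint, and hence the interpolated bound, is finite.
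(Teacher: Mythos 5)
Your proposal is correct and follows essentially the same route as the paper: the paper also deduces the Hausdorff--Young--Paley inequality by applying the weighted (Stein--Weiss type, change-of-measure) interpolation theorem to the map $f\mapsto\bigl(\widehat f(\pi)/\sqrt{k_\pi}\bigr)_{\pi\in\widehat K}$, with the Paley endpoint carrying weight $k_\pi^2\varphi(\pi)^{2-p}$ and exponent $p$, and the Hausdorff--Young endpoint carrying weight $k_\pi^2$ and exponent $p'$. Your exponent bookkeeping $(1-\theta)=\bigl(\tfrac1b-\tfrac1{p'}\bigr)/\bigl(\tfrac1p-\tfrac1{p'}\bigr)$ and the resulting weight and constant match the paper's conclusion exactly.
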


	Throughout the paper, we denote by $\mathbb{N}$ the set of natural numbers and set $\mathbb{N}_0=\mathbb{N} \cup \{0\}.$ For notational convenience, we take empty sums to be zero.

	\section{Preliminaries}
	For the basics of  compact hypergroups one can refer to standard books, monographs and research papers \cite{Dunkl, Jewett, Bloom, Ken, Ken2,  VremD, Vrem}. However we mention here certain results we need.
	\subsection{Definitions and representations of compact hypergroups} In \cite{Jewett}, Jewett refers to hypergroups as convos. We begin this section with the definition of a compact hypergroup.
	\begin{defn} A compact {\it hypergroup} is a non empty compact Hausdorff space  $K$  with a weakly continuous, associative convolution $*$ on the Banach space $M(K)$ of all bounded regular Borel measures on $K$ such that $(M(K), *)$ becomes a Banach algebra and the following properties hold: 
		\begin{enumerate}
			\item[(i)] For any $x,y \in K,$ the convolution $\delta_x*\delta_y$ is a probability measure with compact support, where $\delta_x$ is the point mass measure at $x.$ Also, the mapping $(x,y)\mapsto \spt(\delta_x*\delta_y)$ is continuous from $K\times K$ to the space  $\mathcal{C}(K)$ of all nonempty compact subsets of $K$ equipped with the Michael (Vietoris) topology (see \cite{mi} for details).  
			\item[(ii)]  There exists a unique element $e \in K $ such that $\delta_x*\delta_e=\delta_e*\delta_x=\delta_x$ for every $x\in K.$
			\item[(iii)] There is a homeomophism $x \mapsto \check{x}$ on $K$ of order two which induces an involution on $M(K)$ where  $\check{\mu}(E)= \mu (\check{E})$ for any Borel set $E,$ and  $e \in \spt(\delta_x*\delta_y)$ if and only if $x = \check{y}.$ 
		\end{enumerate}
	\end{defn}
	Note that the weak continuity assures that the convolution of bounded measures on a hypergroup is uniquely determined by the convolution of point measures. A compact hypergroup is called  a commutative compact hypergroup if the convolution is commutative. A compact hypergroup $K$ is called {\it hermitian} if the involution on $K$ is the identity map, i.e., $\check{x}=x$ for all $x \in K.$ 
	Note that a hermitian hypergroup is commutative. Every compact group is  a trivial example of a compact hypergroup. Other essential and non-trivial examples are double coset hypergroups $G//H$ asing from a Gelfand pair $(G, H)$ for a  compact group $G$ and a closed subgroup $H$ \cite{Jewett}, conjugacy classes of  compact Lie groups \cite{Vrem,Bloom}, countable compact hypergroups \cite{dun2, Bloom}, Jacobi hypergroups \cite{Gasper4, Bloom}, hypergroup joins \cite{Vrem2} of compact hypergroups by  finite hypergroups \cite{Alaga,Bloom}.

	A {\it left Haar measure} $\lambda$ on $K$ is a non-zero positive Radon measure such that 
	$$\int_K f(x*y) d\lambda(y)=\int_K f(y)\, d\lambda(y)\quad (\forall x \in K, \, f \in C_c(K)),$$ where we used the notation $f(x*y)=(\delta_x*\delta_y)(f)$. It is well known that a Haar measure is unique if it exists \cite{Jewett}.  Throughout this article, a left Haar measure is simply called a Haar measure.
	We would like to make a remark here that it still not known if a general hypergroup has a Haar measure but several important class of hypergroups including commutative hypergroups, compact hypergroups, discrete hypergroups, nilpotent hypergroups possess a Haar measure \cite{Jewett, Bloom, Willson, Amini}.
	
	An {\it irreducible representation} $\pi$ of $K$ is
	an irreducible $*$- algebra representation of $M(K)$ into  $\mathcal{L}(\mathcal{H}_\pi),$ the algebra of all bounded linear operators on some Hilbert space $\mathcal{H}_\pi,$ such that
	\begin{itemize}
		\item[(i)] $\pi(\delta_e)=I$ and 
		\item[(ii)] for every $u,v\in\mathcal{H}_\pi,$ the mapping $\mu\mapsto\langle \pi(\mu)u,v \rangle$ is continuous from $M(K)^+$ to $\mathbb{C},$ where $M(K)^+$ is equipped with the weak (cone) topology.
	\end{itemize} 
		In \cite{Jewett} it was also included in the definition  of a representation that $\pi$ must be norm decreasing, that is, $\|\pi(\mu)\|_{\text{op}} \leq \|\mu\|,$ but it follows as a consequence of the above definition. For any $x \in K,$ we also write $\pi(\delta_x)$ as $\pi(x).$ Therefore, we get $ \|\pi(x)\|_{\text{op}} \leq \|\delta_x\|=1,$ where $\|\cdot\|_{\text{op}}$ denotes the operator norm on  $\mathcal{L}(\mathcal{H}_{\pi}).$ 
	\subsection{Fourier analysis on compact hypergroups}
	Let $K$ be  a compact hypergroup with the normalized Haar measure $\lambda$ and let $ \widehat{K}$  be the set of irreducible inequivalent continuous representations of $K.$ Throughout this paper we will assume that $K$ is metrizable which is equivalent the condition that $\widehat{K}$ is countable \cite{FR}.   The set $\widehat{K}$ equipped with the discrete topology is called the dual space of $K$. Vrem \cite{Vrem} showed that every irreducible representation $(\pi, \mathcal{H}_{\pi})$ of a compact hypergroup is finite dimensional. For any $\pi \in \widehat{K},$ the map $x \mapsto \langle \pi(x)u, v \rangle$ for $u, v \in \mathcal{H}_{\pi}$ is called matrix coefficient function and is denoted by $\pi_{u,v}.$ Let $\pi(x)= [\pi_{i,j}]_{d_{\pi} \times d_{\pi}}$ be the matrix representation of any $(\pi, \mathcal{H}_{\pi})$ of dimension $d_{\pi}$ with respect to an orthonormal basis $\left\lbrace e_i \right\rbrace_{i=1}^{d_{\pi}}$ of $\mathcal{H}_{\pi}.$ For each pair $\pi, \pi \in \widehat{K}$ there exists a constant $k_{\pi} \geq d_{\pi}$ such that 
	\begin{equation} \label{ortho}
	\int_K \pi_{i,j}(x) \overline{\pi'_{m,l}(x)}\, d\lambda(x) = \begin{cases} \frac{1}{k_{\pi}} & \text{when}\, i=m, j=l, \, \text{and}\, \pi = \pi', \\ 0 & \text{otherwise}.
	\end{cases}
	\end{equation}
	If $K$ is a compact group then $k_{\pi} = d_{\pi}$ \cite [Theorem 2.6]{Vrem}. The  constant $k_\pi$ is called the hyperdimension of the representation $\pi$ \cite{Alaga}. The function $x \mapsto \chi_\pi(x)=:\text{Tr}(\pi(x))$ is called (hypergroup) {\it character} and it is a continuous function.  The following relation for characters can be derived from orthogonality relation \eqref{ortho} of matrix coefficients
	\begin{equation} \label{orch}
	\int_K \chi_{\pi}(x) \overline{\chi_{\pi'}(x)} d\lambda(x) = \begin{cases} \frac{d_\pi}{k_\pi} & \text{if} \,\,\pi =\pi', \\ 0 & \text{otherwise}, \end{cases}
	\end{equation}
	for all $\pi, \pi' \in \widehat{K}.$ Therefore, $\|\chi_\pi\|_{L^2(K)}^2 =\frac{d_\pi}{k_\pi}.$
	
	The $\ell^p_{\text{sch}}$-spaces on $\widehat{K}$ can be defined as $\ell^p_{\text{sch}}(\widehat{K})$ defined in \cite[D.37, D. 36(e)]{HR}. These spaces are studied by Vrem \cite{VremD}. First, for the  space of Fourier coefficients of functions on $K$ we set 
	\begin{equation}
	\Sigma(K)=\{\sigma: \pi \mapsto \sigma(\pi) \in \mathbb{C}^{d_\pi \times d_\pi}: \, \pi \in \widehat{K}\} = \prod_{\pi \in\widehat{K}} \mathbb{C}^{d_\pi \times d_\pi}.
	\end{equation} 
	  The space $\ell^p_{\text{sch}}(\widehat{K}) \subset \Sigma(K)$ is defined by the norm 
	\begin{equation}
	\|\sigma\|_{\ell^p_{\text{sch}}(\widehat{K})}:= \left(\sum_{\pi \in \widehat{K}} k_\pi  \|\sigma(\pi)\|_{S^p}^p \right)^{\frac{1}{p}},\,\,\, \sigma \in \Sigma(K),\,\, 1\leq p<\infty,
	\end{equation}
	and
	$$\|\sigma\|_{\ell^\infty_{\text{sch}}(\widehat{K})}:= \sup_{\pi \in \widehat{K}} \|\sigma(\pi)\|_{\mathcal{L}(\mathcal{H}_\pi)}\,\,\,\, \sigma \in \Sigma(K). $$
	
	The set of all $\sigma \in \Sigma(K)$ such that $\#\{\pi \in \widehat{K}:\, \sigma(\pi) \neq 0\}<\infty$ denoted by $\Sigma_c(\widehat{K})$ and $\Sigma_0(K)$ is the set of all $\sigma \in \Sigma(K)$ such that $\#\{\pi \in \widehat{K}:\, \|\sigma(\pi)\|_{\mathcal{L}(\mathcal{H}_\pi)} \geq \epsilon \}<\infty$ for all $\epsilon>0.$ For each $\pi \in \widehat{K},$ the Fourier transform $\widehat{f}$ of $f \in L^1(K)$ is defined as
	$$\widehat{f}(\pi) = \int_K f(x) \bar{\pi}(x)\, d\lambda(x),$$ where $\bar{\pi}$ is the conjugate representation of $\pi.$  
	  Vrem \cite{Vrem} proved that the map $f \mapsto \widehat{f}$ is a non norm increasing $*$-isomorphism of $L^1(K)$ onto a dense subalgebra of $\Sigma_0(K).$ 
	 For $f \in L^2(K),$  we have 
	\begin{equation} \label{Fseries}
	f = \sum_{\pi \in \widehat{K}} k_{\pi} \sum_{i,j=1}^{d_{\pi}} \widehat{f}(\pi)_{i,j} \pi_{i,j}
	\end{equation}  
	and the series converges in $L^2(K)$ \cite[Corollary 2.10]{Vrem}. Hence, we have the following Plancherel identity $$\|f\|_2^2= \sum_{\pi \in \widehat{K}} k_{\pi} \sum_{i,j=1}^{d_{\pi}} |\widehat{f}(\pi)_{i,j}|^2 = \sum_{\pi \in \widehat{K}} k_{\pi} \|\widehat{f}(\pi)\|_{\textnormal{HS}}^2 =\|\widehat{f}\|_{\ell^2_{\text{sch}}(\widehat{K})}^2. $$ 
	
	The following Hausdorff-Young inequality holds for Fourier transform on compact hypergroups \cite{VremD}.   
	\begin{thm} \label{HYsch}
		Let $1 \leq p \leq 2$ with $\frac{1}{p}+\frac{1}{p'}=1.$ For any $f \in L^p(K)$ we have the following inequality 
		\begin{equation} \label{HY1sch}
		\left( \sum_{\pi \in \widehat{K}} k_\pi \|\widehat{f}(\pi)\|^{p'}_{S^p} \right)^{\frac{1}{p'}} =      \|\widehat{f}\|_{\ell^{p'}_{\text{sch}}(\widehat{K})} \leq \|f\|_{L^p(K)}.
		\end{equation}
	\end{thm}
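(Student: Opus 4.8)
The plan is to obtain \eqref{HY1sch} by complex interpolation between the two endpoint exponents $p=1$ and $p=2$, viewing the Fourier transform $\mathcal{F}\colon f\mapsto\widehat f$ as a single linear operator acting between the scales $L^p(K)$ and $\ell^{p'}_{\text{sch}}(\widehat K)$. Since $K$ is compact and $\lambda$ is normalized, $L^p(K)\subset L^1(K)$ for $1\le p\le 2$, so $\mathcal{F}$ is defined on each $L^p(K)$ and the interpolated operator will be identified with the genuine Fourier transform.

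First I would record the endpoint $p=1$, so that $p'=\infty$. For every $\pi\in\widehat K$ and $x\in K$ one has $\|\bar\pi(x)\|_{\mathcal{L}(\mathcal{H}_\pi)}=\|\pi(x)\|_{\text{op}}\le\|\delta_x\|=1$, as noted after the definition of a representation. Hence, directly from $\widehat f(\pi)=\int_K f(x)\bar\pi(x)\,d\lambda(x)$, I would estimate $\|\widehat f(\pi)\|_{\mathcal{L}(\mathcal{H}_\pi)}\le\int_K|f(x)|\,\|\bar\pi(x)\|_{\text{op}}\,d\lambda(x)\le\|f\|_{L^1(K)}$, uniformly in $\pi$, which is precisely $\|\widehat f\|_{\ell^\infty_{\text{sch}}(\widehat K)}\le\|f\|_{L^1(K)}$. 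The endpoint $p=2$ (so $p'=2$) is the Plancherel identity $\|f\|_{L^2(K)}^2=\|\widehat f\|_{\ell^2_{\text{sch}}(\widehat K)}^2$ recorded above, which gives $\mathcal{F}\colon L^2(K)\to\ell^2_{\text{sch}}(\widehat K)$ with norm $1$.

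Next I would identify the interpolation spaces. Writing $\mu$ for the measure on $\widehat K$ with $\mu(\{\pi\})=k_\pi$, the space $\ell^p_{\text{sch}}(\widehat K)$ is precisely the Schatten-valued Bochner space $L^p(\widehat K,\mu;S^p)$, since $\|\sigma\|_{\ell^p_{\text{sch}}(\widehat K)}^p=\sum_{\pi}k_\pi\|\sigma(\pi)\|_{S^p}^p$ and, at $p=\infty$, $\|\sigma\|_{\ell^\infty_{\text{sch}}(\widehat K)}=\sup_\pi\|\sigma(\pi)\|_{S^\infty}$. Because the underlying measure $\mu$ is the same for every exponent, the complex interpolation of vector-valued $L^p$-spaces together with Calder\'on's interpolation of Schatten classes, $[S^{\infty},S^2]_\theta=S^{p'}$, yields $[\ell^\infty_{\text{sch}}(\widehat K),\ell^2_{\text{sch}}(\widehat K)]_\theta=\ell^{p'}_{\text{sch}}(\widehat K)$ with $\tfrac{1}{p'}=\tfrac{\theta}{2}$. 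On the source side one has the classical $[L^1(K),L^2(K)]_\theta=L^p(K)$ with $\tfrac{1}{p}=1-\tfrac{\theta}{2}$; the two identities are compatible under the single choice $\theta=2/p'$.

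Finally, I would apply the complex (Riesz--Thorin--Calder\'on) interpolation theorem to $\mathcal{F}$, which is bounded with norm $\le1$ on both endpoints, to conclude that it is bounded with norm $\le1$ from $L^p(K)$ into $\ell^{p'}_{\text{sch}}(\widehat K)$ for every $1\le p\le2$, which is exactly \eqref{HY1sch}. To keep the interpolation elementary I would first run it on the dense subspace of finite Fourier series (equivalently $\sigma\in\Sigma_c(\widehat K)$), where all sums are finite and every quantity is manifestly well defined, and then pass to general $f\in L^p(K)$ by approximating in $L^p$, using pointwise convergence of the Fourier coefficients, and applying Fatou's lemma to the sum over $\pi$. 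The main obstacle is the identification $[\ell^\infty_{\text{sch}},\ell^2_{\text{sch}}]_\theta=\ell^{p'}_{\text{sch}}$: one must verify that the common weight $k_\pi$ and the varying Schatten exponent interact correctly, and this is exactly what the reduction to a fixed measure space $(\widehat K,\mu)$ with Schatten-valued fibers is designed to handle, reducing the claim to the standard interpolation $[L^{\infty}(\mu;S^\infty),L^2(\mu;S^2)]_\theta=L^{p'}(\mu;S^{p'})$.
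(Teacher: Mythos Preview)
The paper does not give its own proof of this theorem: it is simply quoted from Vrem's dissertation \cite{VremD}, so there is no in-paper argument to compare against. Your interpolation outline is the standard route for this kind of Hausdorff--Young result and is essentially correct. The two endpoints are exactly as stated in the paper (the operator-norm bound $\|\pi(x)\|_{\mathrm{op}}\le 1$ for $p=1$, and the Plancherel identity for $p=2$), and the identification $\ell^{p'}_{\mathrm{sch}}(\widehat K)=L^{p'}(\widehat K,\mu;S^{p'})$ with $\mu(\{\pi\})=k_\pi$ reduces the problem to Riesz--Thorin/Calder\'on interpolation of Schatten-valued $L^p$-spaces over a fixed measure space. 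One small point worth making explicit in a full write-up: the fibers $S^p(\mathcal H_\pi)$ vary with $\pi$, so strictly speaking you are interpolating an $\ell^{p}$-direct sum of finite-dimensional Schatten spaces rather than a single Bochner space; this is harmless here because each fiber is finite-dimensional and $[S^\infty(\mathbb C^{d_\pi}),S^2(\mathbb C^{d_\pi})]_\theta=S^{p'}(\mathbb C^{d_\pi})$ with constants independent of $d_\pi$, but it is the step that deserves a sentence rather than a parenthetical. Your density-plus-Fatou extension from $\Sigma_c(\widehat K)$ to general $f\in L^p(K)$ is the right way to sidestep the usual $L^\infty$-endpoint subtleties of complex interpolation.
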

	
	Recently, the first author with R. Sarma \cite{KR} also obtained Hausdorff-Young inequality using different norm which was useful to study Hausdorff-Young inequality for Orlicz spaces \cite{KR}. We will discuss it in the next section in more details.
	
	\subsection{Commutative compact hypergroups} In this section we assume  that compact hypergroup $K$ is commutative. Then every representation of $K$ is one dimensional. The dual space  of $K$ defined as follows $$ \widehat{K}= \left\lbrace\chi \in C^b(K): \chi \neq 0,\,\chi(\check{m})= \overline{\chi(m)},\, (\delta_m*\delta_n)(\chi)= \chi(m) \chi(n) \, \text{for all}\,\, m,n \in K \right\rbrace.$$  
	
	An  element in $\widehat{K}$ will be called a {\it character}. Equip  $\widehat{K}$ with the uniform convergence on the compact sets. In case of a compact hypergroups $K$ the dual space $\widehat{K}$ is discrete. In general, $\widehat{K}$ may not have a dual hypergroup structure with respect to the pointwise product \cite[Example 9.1 C]{Jewett} but it holds for most ``natural" hypergroups including the conjugacy classes of compact groups. Then the Fourier transform on $L^1(K, \lambda)$ is defined by 
	$$\widehat{f}(\chi):= \int_K f(x)\, \overline{\chi(x)}\, d\lambda(x),\quad \chi \in \widehat{K}.$$
	The Fourier transform is injective and there exists a Radon measure $\omega$ on $\widehat{K},$ called the Plancherel measure on $\widehat{K}$ such that the map $f \mapsto \widehat{f}$ extends to an isometric isomorphism from $L^2(K, \lambda)$ onto $L^2(\widehat{K}, \omega),$ that is, 
	\begin{equation} \label{pabel}
	\sum_{\chi \in \widehat{K}} |\widehat{f}(\chi)|^2 d\omega(\chi)= \int_K |f(x)|^2\, d\lambda(x).
	\end{equation} 
	In this case, the Fourier series of $f$ given by \eqref{Fseries} takes the form
	\begin{equation}
	f= \sum_{\chi \in \widehat{K}} k_{\chi}\, \widehat{f}(\chi)\, \chi.
	\end{equation}
	It follows from the orthogonality relation of characters \eqref{orch} that the set $\{k_\chi^{\frac{1}{2}} \chi\}_{\chi \in \widehat{K}}$ forms an orthonormal basis of $L^2(K).$ It is also known that for each $\chi \in \widehat{K}$ we have that $\omega(\chi)=k_{\chi}$ \cite[Proposition 1.2]{Alaga}. If $K$ is a compact commutative group then $k_\chi=d_\chi=1$ for all $\chi \in \widehat{K};$ and therefore Plancherel measure on $\widehat{K}$ is constant $1.$    
	
	\section{Hausdorff-Young-Paley and  Hardy-Littlewood inequalities on compact hypergroups}
	In this section, we will study Paley inequality, Hausdorff-Young-Paley inequality and Hardy Littlewood inequality for compact hypergroups. At times, we will denote $L^p(K, \lambda)$ by $L^p(K)$ for simplicity. 
	\subsection{Paley inequality on compact hypergroups} In this subsection, we prove Paley inequality for compact hypergroups. Paley inequality is an important inequality in itself but also plays a vital role to obtain Hardy-Littlewood inequality and Hausdorff-Young-Paley inequality for compact hypergroups. 
	\begin{thm} \label{Paley} Let $K$ be a compact hypergroup and let $1<p \leq 2.$ If $\varphi:\widehat{K} \rightarrow (0, \infty)$ is a function such that
		\begin{equation}\label{Paleycondi}
		M_\varphi:= \sup_{y>0} y \sum_{\overset{\pi \in \widehat{K}}{\varphi(\pi) \geq y}} k_\pi^2<\infty. 
		\end{equation} Then, for all $f \in L^p(K),$ we have 
		\begin{equation} \label{Paley1}
		\left(\sum_{\pi \in \widehat{K}} k_\pi^2 \left( \frac{\|\widehat{f}(\pi)\|_{\textnormal{HS}}}{\sqrt{k_\pi}} \right)^p \varphi(\pi)^{2-p} \right)^{\frac{1}{p}} \lesssim M_\varphi^{\frac{2-p}{p}} \|f\|_{L^p(K)}. 
		\end{equation}
	\end{thm}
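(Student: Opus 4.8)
The plan is to follow the classical Paley strategy, using the interpolation of a well-chosen sublinear operator between $L^1$ and $L^2$ (equivalently, a Marcinkiewicz-type argument using the weak-type bound on the counting function encoded by $M_\varphi$).

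First I would set up the target as a mapping property. Consider the measure space $(\widehat K, \nu)$, where $\nu$ assigns to each $\{\pi\}$ the weight $k_\pi^2 \varphi(\pi)$; this is the natural measure making $M_\varphi$ into a weak-type constant. Define the operator $A$ sending $f$ to the sequence $Af(\pi) = \dfrac{\|\widehat f(\pi)\|_{\mathrm{HS}}}{\sqrt{k_\pi}\,\varphi(\pi)}$, indexed by $\pi\in\widehat K$. The desired inequality \eqref{Paley1} is precisely the boundedness $A\colon L^p(K)\to L^p(\widehat K,\nu)$ with norm controlled by $M_\varphi^{(2-p)/p}$, since
\begin{equation}
\sum_{\pi\in\widehat K} k_\pi^2\,\varphi(\pi)\,|Af(\pi)|^p = \sum_{\pi\in\widehat K} k_\pi^2 \Big(\tfrac{\|\widehat f(\pi)\|_{\mathrm{HS}}}{\sqrt{k_\pi}}\Big)^p \varphi(\pi)^{1-p+1} \cdot \varphi(\pi)^{-?}
\end{equation}
— one must be careful with exponents here: the clean way is to interpolate $A$ between the endpoints $p=1$ and $p=2$ and read off the constant. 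At $p=2$, using the Plancherel identity $\|f\|_{L^2(K)}^2 = \sum_\pi k_\pi \|\widehat f(\pi)\|_{\mathrm{HS}}^2$, one checks directly that $\|Af\|_{L^2(\widehat K,\nu)}^2 = \sum_\pi k_\pi^2\varphi(\pi)\,\dfrac{\|\widehat f(\pi)\|_{\mathrm{HS}}^2}{k_\pi\varphi(\pi)^2} = \sum_\pi k_\pi \dfrac{\|\widehat f(\pi)\|_{\mathrm{HS}}^2}{\varphi(\pi)} \le \|f\|_{L^2(K)}^2$ provided $\varphi\ge$ (a suitable lower bound); more robustly, one works with the pair $(\nu, \varphi^{-2})$ so that the $L^2$ endpoint is exactly Plancherel with constant $1$.

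The core step is the weak-type $(1,1)$ bound for $A$ with respect to $\nu$. For $f\in L^1(K)$ one has the crude pointwise estimate $\|\widehat f(\pi)\|_{\mathrm{HS}} \le \sqrt{d_\pi}\,\|\widehat f(\pi)\|_{\mathcal L(\mathcal H_\pi)} \le \sqrt{d_\pi}\,\|f\|_{L^1(K)} \le \sqrt{k_\pi}\,\|f\|_{L^1(K)}$, using $d_\pi\le k_\pi$ and $\|\bar\pi(x)\|_{\mathrm{op}}\le 1$. Hence $|Af(\pi)| \le \|f\|_{L^1(K)}/\varphi(\pi)$, so the distribution function of $Af$ with respect to $\nu$ satisfies
\begin{equation}
\nu\{\pi : |Af(\pi)| > t\} \le \nu\Big\{\pi : \varphi(\pi) < \tfrac{\|f\|_{L^1(K)}}{t}\Big\},
\end{equation}
and the hypothesis $M_\varphi = \sup_{y>0} y\sum_{\varphi(\pi)\ge y} k_\pi^2 < \infty$ is exactly what bounds the right-hand side (after a dyadic decomposition of the level sets of $\varphi$, or by a summation-by-parts argument) by $C\,M_\varphi\,\|f\|_{L^1(K)}/t$ — again with the bookkeeping of which measure carries the $\varphi$-weight. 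This gives $A\colon L^1(K)\to L^{1,\infty}(\widehat K,\nu)$ with norm $\lesssim M_\varphi$.

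Finally I would invoke the Marcinkiewicz interpolation theorem between the weak-type $(1,1)$ bound and the strong (in fact isometric) $L^2$ bound to obtain $A\colon L^p(K)\to L^p(\widehat K,\nu)$ for $1<p<2$, and track the constant: interpolating norm $M_\varphi$ at $p=1$ with norm $1$ at $p=2$ yields operator norm $\lesssim M_\varphi^{2/p-1} = M_\varphi^{(2-p)/p}$, which is exactly the claimed dependence. Unwinding the definition of $A$ and $\nu$ then gives \eqref{Paley1}. The main obstacle I anticipate is purely organizational rather than conceptual: choosing the weights on $\widehat K$ so that the two endpoints come out with the stated constants simultaneously, and justifying the passage from $M_\varphi<\infty$ to the weak-type estimate via the correct dyadic splitting of the superlevel sets $\{\varphi(\pi)\ge y\}$ — the hyperdimensions $k_\pi$ (rather than $d_\pi$) appearing squared in $M_\varphi$ must be threaded consistently through the $L^1$ bound $\|\widehat f(\pi)\|_{\mathrm{HS}}\le\sqrt{k_\pi}\|f\|_{L^1}$, which is where the non-group nature of $K$ (the gap $k_\pi\ge d_\pi$) is absorbed harmlessly.
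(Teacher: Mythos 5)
Your proposal follows essentially the same route as the paper's proof: the same sublinear operator $Af(\pi)=\|\widehat f(\pi)\|_{\textnormal{HS}}/(\sqrt{k_\pi}\,\varphi(\pi))$, the weak-type $(1,1)$ endpoint with constant $M_\varphi$ obtained from $\|\widehat f(\pi)\|_{\textnormal{HS}}\le\sqrt{k_\pi}\,\|f\|_{L^1(K)}$, the exact $L^2$ endpoint via Plancherel, and Marcinkiewicz interpolation yielding the constant $M_\varphi^{\frac{2-p}{p}}$. The one bookkeeping point you left open resolves exactly as you suspected: the paper takes $\nu(\{\pi\})=k_\pi^2\varphi(\pi)^2$ (not $k_\pi^2\varphi(\pi)$), which makes $\|Af\|_{L^p(\widehat K,\nu)}^p$ equal to the left-hand side of \eqref{Paley1} and the $L^2$ endpoint exactly Plancherel with constant $1$, and the key estimate $\sum_{\varphi(\pi)\le w}k_\pi^2\varphi(\pi)^2\lesssim M_\varphi w$ is proved there by a layer-cake integration in place of your (equally valid) dyadic decomposition.
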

	\begin{proof} 
		Let us consider the measure on $\nu$ on the dual space $\widehat{K}$ on $K$ given by 
		$$\nu(\{\pi\})= \varphi(\pi)^2 k_{\pi}^2,\,\,\,\,\, \pi \in \widehat{K}. $$ 
		Define the space $L^p(\widehat{K}, \nu), 1 \leq p <\infty,$ as the space of all real or complex sequences $a: \pi  \mapsto a_\pi $ such that 
		$$\|a\|_{L^p(\widehat{K}, \nu)}= \left( \sum_{\pi \in \widehat{K}} |a_\pi|^p \varphi(\pi)^2 k_{\pi}^2  \right)^{\frac{1}{p}}<\infty.$$
		We will show that the sublinear operator $A:L^p(K, \lambda) \rightarrow L^p(\widehat{K}, \nu)$ defined by 
		$$Af:= \left( \frac{\|\widehat{f}(\pi)\|_{\textnormal{HS}}}{\sqrt{k_\pi}\, \varphi(\pi)}\right)_{\pi \in \widehat{K}}$$ is well defined and bounded for $1<p \leq 2.$ In other words, we will get the following estimate which will eventually give us the required estimate \eqref{Paley1},
		\begin{equation} \label{vis}
		\|Af\|_{L^p(\widehat{K}, \nu)} = \left( \sum_{\pi \in \widehat{K}} \left( \frac{\|\widehat{f}(\pi)\|_{\textnormal{HS}}}{\sqrt{k_\pi} \varphi(\pi)} \right)^p \varphi(\pi)^2 k_\pi^2 \right)^{\frac{1}{p}} \lesssim M_\varphi^{\frac{2-p}{p}} \|f\|_{L^p(K)},
		\end{equation} 
		where $M_\varphi:= \sup_{y>0}  y \sum_{\overset{\pi \in \widehat{K}}{\varphi(\pi) \geq y}} k_\pi^2. $
		To prove the above estimate \eqref{vis} it is enough to show that $A$ is weak type $(1,1)$ and weak type $(2,2),$ thanks to Marcinkiewicz interpolation theorem. In fact, we show that, with the distribution function $\nu_{\widehat{K}},$ that 
		\begin{equation} \label{Vish11}
		\nu_{\widehat{K}}(y; Af) \leq \frac{M_1 \|f\|_{L^1(K)}}{y} \,\,\,\, \text{with the norm}\,\, M_1= M_\varphi,
		\end{equation}
		\begin{equation} \label{Vish22}
		\nu_{\widehat{K}}(y; Af) \leq \left( \frac{M_2 \|f\|_{L^2(K)}}{y} \right)^{2}\,\,\,\, \text{with the norm}\,\, M_2=1,
		\end{equation}
		where $\nu_{\widehat{K}}(y; Af)$ is defined by $\nu_{\widehat{K}}(y; Af):= \sum_{\overset{\pi \in \widehat{K}}{|(Af)(\pi)| \geq y}} \nu(\pi), \,\,\, y>0.$
		
		First, we show that $A$ is of type $(1,1)$ with norm $M_1=M_\varphi;$ more precisely we show that 
		\begin{equation} \label{vis8}
		\nu_{\widehat{K}}(y; Af)= \nu \left\{ \pi \in \widehat{K}: \frac{\|\widehat{f}(\pi)\|_{\textnormal{HS}}}{\sqrt{k_\pi} \varphi(\pi)}>y \right\} \lesssim \frac{M_\varphi \|f\|_{L^1(K)}}{y},
		\end{equation} where $\nu \left\{ \pi \in \widehat{K}: \frac{\|\widehat{f}(\pi)\|_{\textnormal{HS}}}{\sqrt{k_\pi} \varphi(\pi)}>y \right\}$ can be interpreted as the weighted sum $\sum \varphi(\pi)^2 k_\pi^2$ taken over those $\pi \in \widehat{K}$ such that $\frac{\|\widehat{f}(\pi)\|_{\textnormal{HS}}}{\sqrt{k_\pi} \varphi(\pi)}>y.$
		By the defintion of the Fourier transform and the fact that $\pi$ is a norm decreasing $*$-homomorphism, i.e., $\|\pi(\check{x})\|_{\text{op}} \leq 1$ for all $x \in K,$ we have 
		$$\|\widehat{f}(\pi)\|_{\textnormal{HS}} \leq  \|f\|_{L^1(K)} \|\pi(\check{x})\|_{\textnormal{HS}} \leq \|f\|_{L^1(K)}  \sqrt{d_\pi} \|\pi(\check{x})\|_{\text{op}} \leq \sqrt{d_\pi}  \|f\|_{L^1(K)}.$$
		Therefore, by using  $d_\pi \leq k_\pi,$ we get
		$$y < \frac{\|\widehat{f}(\pi)\|_{\textnormal{HS}}}{\sqrt{k_\pi} \varphi(\pi)} \leq \frac{\sqrt{d_\pi}  \|f\|_{L^1(K)}}{\sqrt{k_\pi} \varphi(\pi)} \leq \frac{\|f\|_{L^1(K)}}{ \varphi(\pi)}.$$
		
		This inequality yields that 
		$$\left\{ \pi \in \widehat{K}: \frac{\|\widehat{f}(\pi)\|_{\textnormal{HS}}}{\sqrt{k_\pi} \varphi(\pi)}>y \right\} \subset \left\{ \pi \in \widehat{K}: \frac{\|f\|_{L^1(K)}}{ \varphi(\pi)} >y \right\}$$ for any $y >0.$ So  
		$$ \nu \left\{ \pi \in \widehat{K}: \frac{\|\widehat{f}(\pi)\|_{\textnormal{HS}}}{\sqrt{k_\pi} \varphi(\pi)}>y \right\} \leq \nu \left\{ \pi \in \widehat{K}: \frac{\|f\|_{L^1(K)}}{ \varphi(\pi)} >y \right\}.$$
		Setting $w= \frac{\|f\|_{L^1(K)}}{y},$ we have 
		$$\nu \left\{ \pi \in \widehat{K}: \frac{\|\widehat{f}(\pi)\|_{\textnormal{HS}}}{\sqrt{k_\pi} \varphi(\pi)}>y   \right\} \leq \sum_{\overset{\pi \in \widehat{K}}{\varphi(\pi) \leq w}} \varphi(\pi)^2 k_\pi^2. $$
		We claim that 
		\begin{equation}
		\sum_{\overset{\pi \in \widehat{K}}{\varphi(\pi) \leq w}} \varphi(\pi)^2 k_\pi^2 \lesssim M_\varphi w. 
		\end{equation}
		In fact, we have 
		$$\sum_{\overset{\pi \in \widehat{K}}{\varphi(\pi) \leq w}} \varphi(\pi)^2 k_\pi^2  = \sum_{\overset{\pi \in \widehat{K}}{\varphi(\pi) \leq w}} k_\pi^2 \int_{0}^{\varphi^2(\pi)} d\tau.$$
		By interchanging  sum and integration we have 
		$$\sum_{\overset{\pi \in \widehat{K}}{\varphi(\pi) \leq w}} k_\pi^2 \int_{0}^{\varphi^2(\pi)} d\tau = \int_{0}^{w^2} d \tau \sum_{\overset{\pi \in \widehat{K}}{\tau^{\frac{1}{2}} \leq \varphi(\pi) \leq w}} k_\pi^2.$$ 
		Next, by making substitution $\tau= t^2$ it yields to 
		$$\int_{0}^{w^2} d \tau \sum_{\overset{\pi \in \widehat{K}}{\tau^{\frac{1}{2}} \leq \varphi(\pi) \leq w}} k_\pi^2 = 2 \int_{0}^w t dt \sum_{\overset{\pi \in \widehat{K}}{ t \leq \varphi(\pi) \leq w}} k_\pi^2 \leq 2 \int_0^w t \, dt \sum_{\overset{\pi \in \widehat{K}}{ t \leq \varphi(\pi)}} k_\pi^2.$$
		
		Since $$t \sum_{\overset{\pi \in \widehat{K}}{ t \leq \varphi(\pi)}} k_\pi^2 \leq \sup_{t>0} t \sum_{\overset{\pi \in \widehat{K}}{ t \leq \varphi(\pi)}} k_\pi^2=M_\varphi$$ is finite by the assumption, we get 
		$$2 \int_0^w t \, dt \sum_{\overset{\pi \in \widehat{K}}{ t \leq \varphi(\pi)}} k_\pi^2 \lesssim M_\varphi w.$$
		Therefore, we get the required estimate \eqref{vis8}
		$$\nu_{\widehat{K}}(y; Af)= \nu \left\{ \pi \in \widehat{K}: \frac{\|\widehat{f}(\pi)\|_{\textnormal{HS}}}{\sqrt{k_\pi} \varphi(\pi)}>y \right\} \lesssim \frac{M_\varphi \|f\|_{L^1(K)}}{y}.$$
		
		Now, we will prove that $A$ is weak type $(2,2),$ that is, the equality \eqref{Vish22}. By using Plancherel's identity we get 
		\begin{align*}
		y^2 \nu_{\widehat{K}}(y; Af) \leq \|Af\|_{L^2(\widehat{K}, \nu)}^2 &= \sum_{\pi \in \widehat{K}} k_\pi^2 \left( \frac{\|\widehat{f}(\pi)\|_{\textnormal{HS}}}{\sqrt{k_\pi} \varphi(\pi)} \right)^2 \varphi(\pi)^2 \\&= \sum_{\pi \in \widehat{K}} k_\pi \|\widehat{f}(\pi)\|_{\textnormal{HS}}^2 =  \|f\|_{L^2(K)}^2. 
		\end{align*}Thus $A$ is of type $(2,2)$ with norm $M_2 \leq 1.$ Thus we have proved \eqref{Vish22} and \eqref{Vish11}. Thus, by using the Marcinkiewicz interpolation theorem with $p_1=1,\, p_2=2$ and $\frac{1}{p}=1- \theta+\frac{\theta}{2}$ we now obtain 
		$$  \left( \sum_{\pi \in \widehat{K}} \left( \frac{\|\widehat{f}(\pi)\|_{\textnormal{HS}}}{\sqrt{k_\pi} \varphi(\pi)} \right)^p \varphi(\pi)^2 k_\pi^2 \right)^{\frac{1}{p}} = \|Af\|_{L^p(\widehat{K}, \nu)} \lesssim       M_\varphi^{\frac{2-p}{p}} \|f\|_{L^p(K)}.$$ This completes the proof.
	\end{proof}

   \begin{rem}
   	One may notice that instead on Schatten $p$-norm we used Hilbert-Schimdt norm in Theorem \ref{Paley}. This is because Hilbert-Schmidt norm gives sharp inequality in Paley-type theorem as already noticed in \cite{ARN} for compact homogeneous spaces and in \cite{Youn} for compact quantum groups. We will see this for compact hypergroups from the discussion below. 
   \end{rem}
	
	Now, we will define and discuss an another important family of Lebesgue spaces $\ell^p$ on $\widehat{K}$ defined using the  Hilbert-Schmidt norm $\|\cdot\|_{\textnormal{HS}}$  instead of Schatten $p$-norm $\|\cdot\|_{S^p}$  on the space of $(d_\pi \times d_\pi)$-dimensional matrices. 
	Recently, these $\ell^p$-space have been studied in more details by the second author and his collaborators in the context of compact Lie groups and compact homogeneous spaces \cite{RuzT, ARN1,AMR,KM,FR,AR14}. In particular, it was shown in \cite{AR14} that the space $\ell^p(\widehat{G})$ and the Hausdorff-Young inequality for it become useful for convergence of Fourier series and characterization of Gevrey-Roumieu ultradifferentiable functions and Gevrey-Beurling ultradifferentiable functions on compact homogeneous manifolds. 
	
	 Next, we define the Lebesgue spaces $\ell^p(\widehat{K}) \subset \Sigma(K)$ by the condition
	\begin{equation}
	\|\sigma\|_{\ell^p(\widehat{K})}:= \left( \sum_{\pi \in \widehat{K}}  k_\pi^{(2-\frac{p}{2})} \|\sigma(\pi)\|_{\textnormal{HS}}^p \right)^{\frac{1}{p}},\,\,\,\, 1 \leq p <\infty,
	\end{equation}
	and $$\|\sigma\|_{\ell^\infty(\widehat{K})}:= \sup_{\pi \in \widehat{K}} k_\pi^{-\frac{1}{2}} \|\sigma(\pi)\|_{\textnormal{HS}}.$$ 
	
	
	We note here that for compact groups such spaces were introduced in \cite[Chapter 10]{RuzT}.
	
	The following proposition presents the relation between both norms on Lebesgue spaces on $\widehat{K}.$  
	\begin{prop} \label{Estisch}
		For $1 \leq p \leq 2,$ we have the following  continuous embeddings as  well as the estimates:  
		$\ell^p(\widehat{K}) \hookrightarrow \ell^p_{sch}(\widehat{K})$ and $\|\sigma\|_{\ell^p_{sch}(\widehat{K})} \leq \|\sigma\|_{\ell^p(\widehat{K})}$ \, for all $\sigma \in \Sigma(K).$ For $2 \leq p \leq \infty,$ we have 
		$\ell^p_{sch}(\widehat{K}) \hookrightarrow \ell^p(\widehat{K})  $ and $ \|\sigma\|_{\ell^p(\widehat{K})} \leq  \|\sigma\|_{\ell^p_{sch}(\widehat{K})} $ \, for all $\sigma \in \Sigma(K).$
	\end{prop}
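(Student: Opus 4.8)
The plan is to reduce everything to a single termwise (per‑representation) comparison of the summands defining the two norms, and then to invoke the elementary Schatten--von Neumann interpolation inequalities on the finite‑dimensional matrix algebra $\mathbb{C}^{d_\pi\times d_\pi}$ together with the structural fact $d_\pi\le k_\pi$ recorded just after \eqref{ortho}. For $1\le p<\infty$ both quantities are $p$‑th powers of sums of nonnegative terms, namely $\|\sigma\|_{\ell^p_{sch}(\widehat{K})}^p=\sum_{\pi}k_\pi\|\sigma(\pi)\|_{S^p}^p$ and $\|\sigma\|_{\ell^p(\widehat{K})}^p=\sum_{\pi}k_\pi^{2-\frac p2}\|\sigma(\pi)\|_{\textnormal{HS}}^p$, so it suffices to establish the claimed inequality for each fixed $\pi$. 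Recalling that $\|\cdot\|_{\textnormal{HS}}=\|\cdot\|_{S^2}$ and dividing through by $k_\pi$, the desired termwise bound is exactly
\begin{equation*}
\|\sigma(\pi)\|_{S^p}\ \le\ k_\pi^{\frac1p-\frac12}\,\|\sigma(\pi)\|_{S^2}\qquad (1\le p\le 2),
\end{equation*}
and the reverse inequality for $2\le p<\infty$.

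The only analytic input is the classical estimate $\|A\|_{S^r}\le n^{\frac1r-\frac1s}\|A\|_{S^s}$, valid for $0<r\le s\le\infty$ on $n\times n$ matrices and obtained by applying H\"older's inequality to the sequence of singular values, together with the monotonicity $\|A\|_{S^s}\le\|A\|_{S^r}$ for $r\le s$. In the range $1\le p\le 2$ I would apply this with $r=p$, $s=2$ and $n=d_\pi$: since $\tfrac1p-\tfrac12\ge 0$ and $d_\pi\le k_\pi$, one may enlarge $d_\pi$ to $k_\pi$ in the exponent, obtaining $\|\sigma(\pi)\|_{S^p}\le d_\pi^{\frac1p-\frac12}\|\sigma(\pi)\|_{S^2}\le k_\pi^{\frac1p-\frac12}\|\sigma(\pi)\|_{S^2}$, which is the required termwise inequality and hence $\ell^p(\widehat{K})\hookrightarrow\ell^p_{sch}(\widehat{K})$ with $\|\sigma\|_{\ell^p_{sch}(\widehat{K})}\le\|\sigma\|_{\ell^p(\widehat{K})}$. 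In the range $2\le p<\infty$ the same H\"older estimate gives instead $\|\sigma(\pi)\|_{S^2}\le d_\pi^{\frac12-\frac1p}\|\sigma(\pi)\|_{S^p}$, i.e.\ $\|\sigma(\pi)\|_{S^p}\ge d_\pi^{\frac1p-\frac12}\|\sigma(\pi)\|_{S^2}$; now $\tfrac1p-\tfrac12\le 0$, so $d_\pi\le k_\pi$ forces $d_\pi^{\frac1p-\frac12}\ge k_\pi^{\frac1p-\frac12}$, and we get $\|\sigma(\pi)\|_{S^p}\ge k_\pi^{\frac1p-\frac12}\|\sigma(\pi)\|_{S^2}$, yielding $\ell^p_{sch}(\widehat{K})\hookrightarrow\ell^p(\widehat{K})$ with $\|\sigma\|_{\ell^p(\widehat{K})}\le\|\sigma\|_{\ell^p_{sch}(\widehat{K})}$. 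The endpoint $p=\infty$ is handled directly: from $\|\sigma(\pi)\|_{\textnormal{HS}}\le\sqrt{d_\pi}\,\|\sigma(\pi)\|_{\mathcal{L}(\mathcal{H}_\pi)}\le\sqrt{k_\pi}\,\|\sigma(\pi)\|_{\mathcal{L}(\mathcal{H}_\pi)}$ we get $k_\pi^{-\frac12}\|\sigma(\pi)\|_{\textnormal{HS}}\le\|\sigma(\pi)\|_{\mathcal{L}(\mathcal{H}_\pi)}$ for every $\pi$, and taking the supremum over $\widehat{K}$ gives $\|\sigma\|_{\ell^\infty(\widehat{K})}\le\|\sigma\|_{\ell^\infty_{sch}(\widehat{K})}$.

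There is no real obstacle here; the argument is essentially bookkeeping. The one point that deserves a moment's care is tracking the sign of the exponent $\tfrac1p-\tfrac12$: it dictates whether the dimension‑dependent Schatten inequality must be used ``upward'' or ``downward'', and it is exactly this sign change at $p=2$ that makes the inclusion between the two scales reverse direction. The Hilbert--Schmidt normalization built into $\ell^p(\widehat{K})$, combined with the inequality $d_\pi\le k_\pi$ (which degenerates to an equality for compact groups, where the two scales coincide), is what keeps the embedding constants equal to $1$.
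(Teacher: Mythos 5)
Your proof is correct and follows essentially the same route as the paper: a termwise comparison for each $\pi$ via H\"older's inequality on the singular values (the dimension-dependent Schatten estimate $\|A\|_{S^p}\le d_\pi^{\frac1p-\frac12}\|A\|_{S^2}$ and its reverse), combined with $d_\pi\le k_\pi$ and the sign of $\frac1p-\frac12$, then summing over $\widehat{K}$, with the $p=\infty$ case handled by the same Hilbert--Schmidt versus operator-norm bound. No substantive differences from the paper's argument.
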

	\begin{proof}
		For $p=2,$ the norms coincide since $S^2=\textnormal{HS}.$ Let $1 \leq p <2.$ Since $\sigma(\pi) \in \mathbb{C}^{d_\pi \times d_\pi},$ denoting $s_j$ its singular number, by H$\ddot{\text{o}}$lder inequality we have 
		\begin{equation} \label{Scht}
		\|\sigma(\pi)\|_{S^p}^p = \sum_{j=1}^{d_\pi} s_j^p \leq \left(  \sum_{j=1}^{d_\pi} 1 \right)^{\frac{2-p}{2}} \left( \sum_{j=1}^{d_\pi} s_{j}^{p \frac{2}{p}} \right)^{\frac{p}{2}} = d_{\pi}^{\frac{2-p}{2}} \|\sigma(\pi)\|_{\textnormal{HS}}^p.
		\end{equation}
		Consequently, it follows that 
		$$\|\sigma\|^p_{\ell_{sch}^p(\widehat{K})}= \sum_{\pi \in \widehat{K}} k_\pi \|\sigma(\pi)\|_{S^p}^p \leq \sum_{\pi \in \widehat{K}} k_\pi d_{\pi}^{\frac{2-p}{2}} \|\sigma(\pi)\|_{\textnormal{HS}}^p \leq \sum_{\pi \in \widehat{K}} k_\pi k_{\pi}^{\frac{2-p}{2}} \|\sigma(\pi)\|_{\textnormal{HS}}^p = \|\sigma\|_{\ell^p(\widehat{K})}^p.$$
		Now, for $2<p <\infty,$ we have 
		\begin{equation}
		\|\sigma(\pi)\|_{\textnormal{HS}}^2 = \sum_{j=1}^{d_\pi} s_j^2 \leq \left( \sum_{j=1}^{d_\pi} 1 \right)^{\frac{p-2}{p}} \left( \sum_{j=1}^{d_\pi} s_j^{2 \frac{p}{2}}\right)^{\frac{2}{p}} = d_\pi^{\frac{p-2}{p}} \|\sigma(\pi)\|_{S^p}^2,
		\end{equation} 
		implying 
		$$ \|\sigma(\pi)\|_{\textnormal{HS}} \leq d_\pi^{\frac{p-2}{2p}} \|\sigma(\pi)\|_{S^p}.$$
		Therefore, we have 
		$$\|\sigma\|_{\ell^p(\widehat{K})}= \sum_{\pi \in \widehat{k}} k_\pi^{(2-\frac{p}{2})} \|\sigma(\pi)\|_{\textnormal{HS}}^p \leq \sum_{\pi \in \widehat{K}}  k_\pi^{(2-\frac{p}{2})} d_\pi^{\frac{p-2}{2}} \|\sigma(\pi)\|^p_{S^p} \leq \sum_{\pi \in \widehat{k}} k_\pi\|\sigma(\pi)\|^p_{S^p}  = \|\sigma\|_{\ell^p_{sch}(\widehat{K})}^p.$$
		Finally, for $p=\infty,$ the inequality $$\|\sigma(\pi)\|_{\textnormal{HS}} \leq k_{\pi}^{\frac{1}{2}}\|\sigma(\pi)\|_{\mathcal{L}(\mathcal{H}_\pi)}$$ implies 
		$$\|\sigma\|_{\ell^\infty(\widehat{K})} = \sup_{\pi \in \widehat{K}} k_\pi^{\frac{1}{2}} \|\sigma(\pi)\|_{\textnormal{HS}} \leq \sup_{\pi \in \widehat{K}} \|\sigma(\pi)\|_{\mathcal{L}(\mathcal{H}_\pi)}= \|\sigma\|_{\ell^\infty_{\text{sch}}(\widehat{K})}.$$
	\end{proof}

The following Hausdorff-Young inequality for Fourier transform on compact hypergroups was recently obtained by the first author and R. Sarma \cite{KR}.  
\begin{thm} \label{HY}
	Let $1 \leq p \leq 2$ with $\frac{1}{p}+\frac{1}{p'}=1.$ For any $f \in L^p(K)$ we have the following inequality 
	\begin{equation} \label{HY1}
	\left( \sum_{\pi \in \widehat{K}} k_\pi^{2-\frac{p'}{2}} \|\widehat{f}(\pi)\|^{p'}_{\textnormal{HS}} \right)^{\frac{1}{p'}} =      \|\widehat{f}\|_{\ell^{p'}(\widehat{K})} \leq \|f\|_{L^p(K)}.
	\end{equation}
\end{thm}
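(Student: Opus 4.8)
The plan is to derive Theorem~\ref{HY} directly from two results already available in the excerpt: the Schatten-norm Hausdorff--Young inequality of Theorem~\ref{HYsch} and the norm comparison of Proposition~\ref{Estisch}. The point is simply that $1\leq p\leq 2$ forces the conjugate exponent to lie in the range $2\leq p'\leq\infty$, which is exactly the regime in which Proposition~\ref{Estisch} gives the continuous embedding $\ell^{p'}_{sch}(\widehat{K})\hookrightarrow\ell^{p'}(\widehat{K})$ together with $\|\sigma\|_{\ell^{p'}(\widehat{K})}\leq\|\sigma\|_{\ell^{p'}_{sch}(\widehat{K})}$ for every $\sigma\in\Sigma(K)$. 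Applying this with $\sigma=\widehat{f}$ and then invoking Theorem~\ref{HYsch} yields
\[
\|\widehat{f}\|_{\ell^{p'}(\widehat{K})}\leq\|\widehat{f}\|_{\ell^{p'}_{sch}(\widehat{K})}\leq\|f\|_{L^p(K)},
\]
which is precisely \eqref{HY1}. Along this route there is essentially no obstacle once Proposition~\ref{Estisch} has been established; the only point to double-check is the endpoint $p=1$, where Theorem~\ref{HYsch} reads $\sup_{\pi}\|\widehat{f}(\pi)\|_{\mathcal{L}(\mathcal{H}_\pi)}\leq\|f\|_{L^1(K)}$ and Proposition~\ref{Estisch} supplies $\|\widehat{f}\|_{\ell^\infty(\widehat{K})}\leq\|\widehat{f}\|_{\ell^\infty_{sch}(\widehat{K})}$.

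Alternatively, one can give a self-contained proof by interpolation, in the spirit of the classical Hausdorff--Young argument. Here the scheme is to establish the two endpoint estimates and then interpolate. The $L^1$ endpoint is the trivial bound: from $\widehat{f}(\pi)=\int_K f(x)\bar{\pi}(x)\,d\lambda(x)$ and $\|\bar{\pi}(x)\|_{\textnormal{HS}}\leq\sqrt{d_\pi}\,\|\bar{\pi}(x)\|_{\text{op}}\leq\sqrt{d_\pi}\leq\sqrt{k_\pi}$ one gets $\|\widehat{f}(\pi)\|_{\textnormal{HS}}\leq\sqrt{k_\pi}\,\|f\|_{L^1(K)}$, i.e. $\|\widehat{f}\|_{\ell^\infty(\widehat{K})}\leq\|f\|_{L^1(K)}$. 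The $L^2$ endpoint is the Plancherel identity, which is exactly $\|\widehat{f}\|_{\ell^2(\widehat{K})}=\|f\|_{L^2(K)}$ because the weight $k_\pi^{2-p'/2}$ collapses to $k_\pi$ at $p'=2$. One then recasts the $\ell^{p'}(\widehat{K})$ norm as the $\ell^{p'}$-norm, with respect to the fixed measure $\nu(\{\pi\})=k_\pi^2$ on $\widehat{K}$, of the scalar sequence $\pi\mapsto k_\pi^{-1/2}\|\widehat{f}(\pi)\|_{\textnormal{HS}}$ (note $k_\pi^2\cdot k_\pi^{-p'/2}=k_\pi^{2-p'/2}$), views $f\mapsto\widehat{f}$ as a map between the interpolation couples $\bigl(L^1(K),L^2(K)\bigr)$ and $\bigl(\ell^\infty(\widehat{K}),\ell^2(\widehat{K})\bigr)$, and applies the Riesz--Thorin (or Stein) interpolation theorem at $\tfrac1p=1-\theta+\tfrac\theta2$, first on the dense subspace $\Sigma_c(\widehat{K})$ (equivalently, functions with finitely supported Fourier series) and then passing to the limit.

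The only genuinely delicate point in the interpolation route is the bookkeeping of the $k_\pi$-dependent weight: one must be sure that the exponent $2-p'/2$ in the definition of $\|\cdot\|_{\ell^{p'}(\widehat{K})}$ really does interpolate consistently between the two endpoints, and this becomes transparent precisely after the rewriting above, since then the measure $\nu(\{\pi\})=k_\pi^2$ is independent of $p'$ and the pointwise norm $k_\pi^{-1/2}\|\cdot\|_{\textnormal{HS}}$ matches both the $\ell^\infty$-weight and the Plancherel weight simultaneously. Since this paper in any case relies on both Theorem~\ref{HYsch} and Proposition~\ref{Estisch} elsewhere, I would present the short deductive argument as the primary proof and, if desired, record the interpolation proof as a remark; in either case the substance of the theorem is already carried by the two cited inputs, so the argument here is brief.
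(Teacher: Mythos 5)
Your proposal is correct, and it is worth noting that the paper itself does not prove Theorem \ref{HY} at all: it is quoted as a known result from the reference [KR] (Kumar--Sarma), so there is no internal proof to compare against. Your primary argument — combine Theorem \ref{HYsch} with the $2\leq p\leq\infty$ half of Proposition \ref{Estisch} applied at the exponent $p'$, so that $\|\widehat{f}\|_{\ell^{p'}(\widehat{K})}\leq\|\widehat{f}\|_{\ell^{p'}_{sch}(\widehat{K})}\leq\|f\|_{L^p(K)}$ — is exactly the observation the authors make informally right after the theorem, when they say that in view of Proposition \ref{Estisch} the Schatten-norm inequality \eqref{HY1sch} is sharper than \eqref{HY1}; you have simply turned that remark into the (complete, two-line) proof, including the $p=1$ endpoint, and this is sound. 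Your alternative interpolation route is also essentially right and is the standard self-contained proof: the $L^1\to\ell^\infty(\widehat{K})$ bound via $\|\widehat{f}(\pi)\|_{\textnormal{HS}}\leq\sqrt{d_\pi}\,\|f\|_{L^1}\leq\sqrt{k_\pi}\,\|f\|_{L^1}$, the Plancherel identity at $p=2$, and the rewriting with the fixed measure $\nu(\{\pi\})=k_\pi^2$ and pointwise weight $k_\pi^{-1/2}$ are all correct. The one technical point to state more carefully is that Riesz--Thorin should be applied to the \emph{linear} map $f\mapsto\bigl(k_\pi^{-1/2}\widehat{f}(\pi)\bigr)_\pi$ viewed as taking values in the weighted $\ell^{p'}$-spaces of matrix-valued sequences with the Hilbert--Schmidt norm at each $\pi$ (a vector-valued Riesz--Thorin, as in the compact group case), rather than to the sublinear scalarized sequence $\pi\mapsto k_\pi^{-1/2}\|\widehat{f}(\pi)\|_{\textnormal{HS}}$, to which Riesz--Thorin does not literally apply; since the two targets have identical norms, this is a formulation issue rather than a gap. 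Given that the paper leans on [KR] for this statement, presenting your short deduction from Theorem \ref{HYsch} and Proposition \ref{Estisch} as the primary proof is a reasonable and self-contained choice.
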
 
	In the view of Proposition \ref{Estisch} one can see that the Hausdorff-Young inequality \eqref{HY1sch} using Schatten $p$-norm is sharper than inequality \eqref{HY1}. In \cite{KR}, Theorem \ref{HY} is further used to define Orlicz space on dual of compact hypergroups and to obtained Hausdorff-Young inequality for Orlicz spaces on compact hypergroup.
	
	The  Paley inequality can be reduced  to the familiar form using Schatten $p$-norm. The proof of it is immediate from the inequality \eqref{Scht} and the fact that $d_\pi \leq k_\pi.$ 
	
	\begin{cor} Let $K$ be a compact hypergroup and let $1<p \leq 2.$ If $\varphi:\widehat{K} \rightarrow (0, \infty)$ is a function satisfying condition \eqref{Paleycondi} of Theorem \ref{Paley} then there exist a universal constant $C=C(p)$ such that 
		\begin{equation} 
		\left(\sum_{\pi \in \widehat{G}} k_\pi \|\widehat{f}(\pi)\|^{p}_{S^p}\, \varphi(\pi)^{2-p} \right)^{\frac{1}{p}} \leq C \|f\|_{L^p(K)}. 
		\end{equation}
	\end{cor}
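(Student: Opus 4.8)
The plan is to obtain this as a one-line consequence of the Paley inequality of Theorem~\ref{Paley} combined with the elementary Schatten-to-Hilbert--Schmidt comparison already established inside the proof of Proposition~\ref{Estisch}. First I would recall inequality~\eqref{Scht}: for every $\pi\in\widehat{K}$ and $1<p\le 2$,
\[
\|\widehat{f}(\pi)\|_{S^p}^{p}\le d_\pi^{\frac{2-p}{2}}\,\|\widehat{f}(\pi)\|_{\textnormal{HS}}^{p}.
\]
Since $p\le 2$ the exponent $\frac{2-p}{2}$ is nonnegative, so the bound $d_\pi\le k_\pi$ upgrades this to $\|\widehat{f}(\pi)\|_{S^p}^{p}\le k_\pi^{\frac{2-p}{2}}\|\widehat{f}(\pi)\|_{\textnormal{HS}}^{p}$; this is the only point at which the hyperdimension $k_\pi$ enters.

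Next I would multiply through by $k_\pi\,\varphi(\pi)^{2-p}\ge 0$ and sum over $\pi\in\widehat{K}$. Using $k_\pi\cdot k_\pi^{(2-p)/2}=k_\pi^{2-p/2}=k_\pi^{2}k_\pi^{-p/2}$, the left-hand side of the desired inequality is dominated by
\[
\sum_{\pi\in\widehat{K}} k_\pi^{\,2-\frac{p}{2}}\,\|\widehat{f}(\pi)\|_{\textnormal{HS}}^{p}\,\varphi(\pi)^{2-p}
=\sum_{\pi\in\widehat{K}} k_\pi^{2}\left(\frac{\|\widehat{f}(\pi)\|_{\textnormal{HS}}}{\sqrt{k_\pi}}\right)^{p}\varphi(\pi)^{2-p},
\]
which is precisely the $p$-th power of the quantity on the left of the Paley inequality~\eqref{Paley1}. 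Applying Theorem~\ref{Paley} and taking $p$-th roots then yields the claim; the constant is $C=C(p)=M_\varphi^{(2-p)/p}$ times the Paley constant, absorbed into a single $C(p)$ (for $p=2$ the two norms coincide and the estimate reduces to the Plancherel identity).

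I do not expect any genuine obstacle: the corollary is pure bookkeeping on top of Theorem~\ref{Paley}. The only thing to watch is the direction of the monotonicity step --- because $1<p\le 2$ forces $\frac{2-p}{2}\ge 0$, the inequality $d_\pi\le k_\pi$ pushes $d_\pi^{(2-p)/2}$ up to $k_\pi^{(2-p)/2}$ in the right direction, so no sharpness is lost beyond what is already lost in passing from the Hilbert--Schmidt to the Schatten $p$-norm, as remarked after Proposition~\ref{Estisch}.
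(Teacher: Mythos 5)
Your proposal is correct and follows exactly the route the paper indicates: the paper derives the corollary as an immediate consequence of inequality \eqref{Scht} together with $d_\pi \leq k_\pi$, followed by an application of Theorem \ref{Paley}, which is precisely your bookkeeping argument (including the observation that the constant absorbs the factor $M_\varphi^{\frac{2-p}{p}}$).
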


	\subsection{Hardy-Littlewood inequality on compact hypergroups}
	In this section, we apply Paley inequality to get the Hardy-Littlewood inequality on compact hypergroups. This approach has been recently considered to prove the Hardy-Littlewood inequality in the context of compact Lie groups $\text{SU(2)}$ \cite{ARN1}, compact homogeneous manifolds \cite{ARN} and compact quantum groups \cite{AMR,Youn}. The philosophy to derive Hardy-Littlewood inequality is to choose the function $\varphi$ suitably such that condition \eqref{Paleycondi} of Theorem \ref{Paley} is satisfied. In the case of a compact Lie group $G$ of dimension $n$, in \cite{ARN} the authors took $\varphi(\pi)= \langle \pi \rangle^{-n},$ where $\langle \pi \rangle$ denote the eigenvalues of the operator $(1-\Delta_G)^{\frac{1}{2}}$ corresponding to the representation $\pi$ for a Laplacian $\Delta_G$ on $G.$ Although,  for $\text{SU(2)}$  this was proved by repeating the proof of Paley inequality and estimating the bound explicitly (\cite{ARN1}). In the case of compact quantum groups, the proof of this inequality  has been achieved by using the geometric informations of compact quantum groups like spectral triples \cite{AMR} and the natural length function on the dual of compact quantum groups \cite{Youn}. The compact hypergroups in general are not equipped with any geometric and differential structure so we prove the following Hardy-Littlewood inequality for compact hypergroups.  
	\begin{thm} \label{HLG}
	Let $1 <p \leq 2$ and let $K$ be a compact hypergroup. Assume that a positive function $\pi \mapsto \mu_\pi$ on $\widehat{K}$ grows sufficiently fast, that is, 
	\begin{equation} \label{HLGcondi}
	\sum_{\pi \in \widehat{K}} \frac{k_\pi^2}{|\mu_\pi|^\beta}<\infty\,\,\,\, \text{for some}\,\beta \geq 0.
	\end{equation}  Then we have 
	\begin{equation} \label{HL}
	\sum_{\pi \in \widehat{K}} k_\pi^2 |\mu_\pi|^{\beta (p-2)} \left( \frac{\|\widehat{f}(\pi)\|_{\textnormal{HS}}}{\sqrt{k_\pi}} \right)^p \lesssim \|f\|_{L^p(K)}.
	\end{equation}
\end{thm}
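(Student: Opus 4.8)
The plan is to derive Theorem~\ref{HLG} from the Paley inequality (Theorem~\ref{Paley}) by choosing the weight $\varphi$ appropriately. Specifically, I would set
\[
\varphi(\pi) := \frac{1}{|\mu_\pi|^{\beta}}, \qquad \pi \in \widehat{K},
\]
which is a positive function on $\widehat{K}$ because $\pi \mapsto \mu_\pi$ is positive. With this choice, the conclusion \eqref{Paley1} of Theorem~\ref{Paley} reads
\[
\left( \sum_{\pi \in \widehat{K}} k_\pi^2 \left( \frac{\|\widehat{f}(\pi)\|_{\textnormal{HS}}}{\sqrt{k_\pi}} \right)^p |\mu_\pi|^{-\beta(2-p)} \right)^{\frac 1p} \lesssim M_\varphi^{\frac{2-p}{p}} \|f\|_{L^p(K)},
\]
and raising both sides to the power $p$ gives precisely \eqref{HL}, since $-\beta(2-p) = \beta(p-2)$. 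So the only thing that needs to be checked is that the hypothesis \eqref{HLGcondi} implies that
\[
M_\varphi = \sup_{y>0}\, y \sum_{\substack{\pi \in \widehat{K} \\ |\mu_\pi|^{-\beta} \geq y}} k_\pi^2 < \infty.
\]

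The estimate on $M_\varphi$ is the heart of the matter, and it is a standard Chebyshev-type argument. Fix $y > 0$. The condition $|\mu_\pi|^{-\beta} \geq y$ is equivalent to $\dfrac{1}{|\mu_\pi|^{\beta}} \geq y$, i.e. $\dfrac{k_\pi^2}{|\mu_\pi|^{\beta}} \geq y\, k_\pi^2$, so on the index set $\{\pi : |\mu_\pi|^{-\beta}\ge y\}$ we have $k_\pi^2 \leq \dfrac{1}{y}\cdot\dfrac{k_\pi^2}{|\mu_\pi|^{\beta}}$. Therefore
\[
y \sum_{\substack{\pi \in \widehat{K} \\ |\mu_\pi|^{-\beta} \geq y}} k_\pi^2 \;\leq\; y \sum_{\substack{\pi \in \widehat{K} \\ |\mu_\pi|^{-\beta} \geq y}} \frac{1}{y}\,\frac{k_\pi^2}{|\mu_\pi|^{\beta}} \;\leq\; \sum_{\pi \in \widehat{K}} \frac{k_\pi^2}{|\mu_\pi|^{\beta}} \;<\; \infty,
\]
where the last inequality uses \eqref{HLGcondi}. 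Since the bound is independent of $y$, taking the supremum over $y>0$ yields $M_\varphi \leq \sum_{\pi \in \widehat{K}} k_\pi^2 |\mu_\pi|^{-\beta} < \infty$, as required.

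With $M_\varphi$ finite, Theorem~\ref{Paley} applies directly and produces \eqref{HL} after raising to the power $p$; the implied constant absorbs the factor $M_\varphi^{2-p}$, which is a finite constant depending only on $p$, $\beta$ and the sequence $\{\mu_\pi\}$, but not on $f$. I do not expect any real obstacle here: the argument is essentially a one-line substitution into the Paley inequality together with the elementary Chebyshev bound above. The only minor point to be careful about is the edge case $\beta = 0$, where $\varphi \equiv 1$ and \eqref{HL} degenerates to the $p$-th power of the Hausdorff--Young-type inequality $\sum_\pi k_\pi^2 (\|\widehat f(\pi)\|_{\textnormal{HS}}/\sqrt{k_\pi})^p \lesssim \|f\|_{L^p(K)}^p$ for a single compact hypergroup with $\sum_\pi k_\pi^2 < \infty$; this is consistent with Theorem~\ref{Paley} applied with the constant weight and needs no separate treatment.
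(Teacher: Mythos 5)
Your proposal is correct and follows essentially the same route as the paper: choose $\varphi(\pi)=|\mu_\pi|^{-\beta}$, verify $M_\varphi\le\sum_{\pi}k_\pi^2|\mu_\pi|^{-\beta}<\infty$ by exactly the Chebyshev-type comparison the paper uses, and then invoke Theorem \ref{Paley}. No gaps; the only cosmetic difference is that the paper does not spell out the final raising to the $p$-th power (note the right-hand side of \eqref{HL} should really carry $\|f\|_{L^p(K)}^p$ for homogeneity, as your computation shows).
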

\begin{proof}
	By the assumption, we know that 
	$$C:= \sum_{\pi \in \widehat{K}} \frac{k_\pi^2}{|\mu_\pi|^\beta}< \infty.$$ Then we have 
	$$ C \geq \sum_{\overset{\pi \in \widehat{K}}{|\mu_\pi|^\beta \leq \frac{1}{t}}} \frac{k_\pi^2}{|\mu_\pi|^\beta} \geq t \sum_{\overset{\pi \in \widehat{K}}{|\mu_\pi|^\beta \leq \frac{1}{t}}} k_\pi^2 = t \sum_{\overset{\pi \in \widehat{K}}{ \frac{1}{|\mu_\pi|^\beta}} \geq t} k_\pi^2$$ and consequently we have  
	
	$$ \sup_{t>0}t \sum_{\overset{\pi \in \widehat{K}}{ \frac{1}{|\mu_\pi|^\beta}} \geq t} k_\pi^2 \leq C <\infty. $$ Then, as an application of Theorem \ref{Paley} with $\varphi(\pi)= \frac{1}{|\mu_\pi|^\beta},\,\, \pi \in \widehat{K},$ we get the required estimate \eqref{HL}.
\end{proof}
In the case when $K$ is abelian, the Hardy-Littlewood inequality takes the following form. 

\begin{thm} \label{HLabe}
	Let $1<p \leq 2$ and let $K$ be a compact abelian hypergroup. Assume that a positive function $\chi \mapsto \mu_\chi$ on $\widehat{K}$ satisfies the condition
	\begin{equation} \label{HLabecon}
	\sum_{\chi \in \widehat{K}} \frac{k_\chi^2}{|\mu_\chi|^\beta} <\infty\quad \text{for some}\,\, \beta \geq 0.
	\end{equation}
	Then we have 
	\begin{equation}
	\sum_{\chi \in \widehat{K}} k_\chi^{2-\frac{p}{2}} |\mu_\chi|^{\beta(p-2)} |\widehat{f}(\chi)|^p \lesssim \|f\|_{L^p(K)}. 
	\end{equation}
	\end{thm}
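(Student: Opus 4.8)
The plan is to derive Theorem \ref{HLabe} as a direct specialization of Theorem \ref{HLG} to the commutative setting, so that essentially no new work is needed beyond bookkeeping of the exponents of the hyperdimensions $k_\chi$. Recall that when $K$ is a compact abelian hypergroup every irreducible representation is one-dimensional and is given by a character $\chi \in \widehat{K}$: here $\mathcal{H}_\chi = \mathbb{C}$, $d_\chi = 1$, and the Fourier coefficient $\widehat{f}(\chi) = \int_K f(x)\overline{\chi(x)}\,d\lambda(x)$ is a scalar, so that $\|\widehat{f}(\chi)\|_{\textnormal{HS}} = |\widehat{f}(\chi)|$. Moreover the indexing set $\widehat{K}$ appearing in Theorem \ref{HLG} is exactly the dual of the commutative hypergroup, and the growth hypothesis \eqref{HLabecon} on $\chi \mapsto \mu_\chi$ is verbatim condition \eqref{HLGcondi}.

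I would then simply apply Theorem \ref{HLG} with this $\mu$. Its conclusion \eqref{HL} becomes
\begin{equation*}
\sum_{\chi \in \widehat{K}} k_\chi^2 \, |\mu_\chi|^{\beta(p-2)} \left( \frac{|\widehat{f}(\chi)|}{\sqrt{k_\chi}} \right)^p \lesssim \|f\|_{L^p(K)},
\end{equation*}
and since $\bigl(k_\chi^{-1/2}\bigr)^p = k_\chi^{-p/2}$, we have $k_\chi^2 \cdot k_\chi^{-p/2} = k_\chi^{2-\frac{p}{2}}$; the left-hand side is therefore exactly $\sum_{\chi \in \widehat{K}} k_\chi^{2-\frac{p}{2}} |\mu_\chi|^{\beta(p-2)} |\widehat{f}(\chi)|^p$, which is the asserted inequality.

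If a self-contained argument is preferred, the same conclusion follows by invoking the Paley inequality (Theorem \ref{Paley}) directly with $\varphi(\chi) = |\mu_\chi|^{-\beta}$. The summability hypothesis \eqref{HLabecon} gives $M_\varphi = \sup_{t>0} t \sum_{\chi:\,\varphi(\chi) \geq t} k_\chi^2 < \infty$ --- precisely the estimate already carried out at the start of the proof of Theorem \ref{HLG} --- and then \eqref{Paley1}, combined with $\varphi(\chi)^{2-p} = |\mu_\chi|^{\beta(p-2)}$ and $\|\widehat{f}(\chi)\|_{\textnormal{HS}} = |\widehat{f}(\chi)|$, yields the claim. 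There is no genuine obstacle here: the only thing to observe is that one-dimensionality of the representations turns the Hilbert--Schmidt norm into the modulus and collapses the power of $k_\chi$ from $2$ down to $2-\frac{p}{2}$.
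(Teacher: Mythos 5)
Your proposal is correct and coincides with the paper's (implicit) argument: Theorem \ref{HLabe} is stated there precisely as the abelian specialization of Theorem \ref{HLG}, where one-dimensionality gives $\|\widehat{f}(\chi)\|_{\textnormal{HS}}=|\widehat{f}(\chi)|$ and $k_\chi^2\,k_\chi^{-p/2}=k_\chi^{2-\frac{p}{2}}$, with the summability hypothesis identical to \eqref{HLGcondi}. Your alternative route via Theorem \ref{Paley} with $\varphi(\chi)=|\mu_\chi|^{-\beta}$ is likewise exactly how the paper derives Theorem \ref{HLG}, so nothing further is needed.
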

\begin{rem}
    We would like to note here that in the case when $K$ is a compact Lie group, the natural choices of $\pi \mapsto \mu_\pi$ is $\pi \mapsto \langle \pi \rangle.$ But for this choice of $ \mu_\pi$ the quantity $\sum_{\pi \in \widehat{K} }\frac{k_\pi}{|\mu_\pi|^{\beta}},$ which turns out to be $\sum_{\pi \in \widehat{K} }\frac{d_\pi}{\langle \pi \rangle^{\beta}}$ in this case, is not finite for $\beta=n:=\dim(G)$ as proved by the second author and Dasgupta \cite{AR14}. So this does not give the Hardy-Littlewood inequality for compact Lie groups, in particular, for $\mathbb{T}^n$ (\cite{ARN}). Surprisingly, the quantity $\sum_{\pi \in \widehat{K} }\frac{k_\pi}{|\mu_\pi|^{\beta}}$ is finite with a natural choice of $\pi \mapsto \mu_\pi$ and $\beta$ for (pure) hypergroups including conjugacy classes of compact Lie groups and countable compact hypergroups as shown in the last section and consequently, provides the Hardy-Littlewood inequality for compact hypergroups.   
    
\end{rem}

	\subsection{Hausdorff-Young-Paley inequality on compact hypergroups}
	In this subsection, we prove the Hausdorff-Young-Paley inequality for compact hypergroups. The Hausdorff-Young-Paley inequality is an important inequality in itself but it serves as an essential tool to prove $L^p$-$L^q$ Fourier multiplier for compact hypergroups. 
	
	The following theorem \cite{BL} is useful  in the proof of the Hausdorff-Young-Paley inequality.  	
	
	\begin{thm}\label{interpolationoperator} Let $d\mu_0(x)= \omega_0(x) d\mu(x),$ $d\mu_1(x)= \omega_1(x) d\mu(x).$ Suppose that $0<p_0, p_1< \infty.$  If a continuous linear operator $A$ admits bounded extensions, $A: L^p(Y,\mu)\rightarrow L^{p_0}(\omega_0) $ and $A: L^p(Y,\mu)\rightarrow L^{p_1}(\omega_1) ,$   then there exists a bounded extension $A: L^p(Y,\mu)\rightarrow L^{b}(\tilde{\omega}) $ of $A$, where  $0<\theta<1, \, \frac{1}{b}= \frac{1-\theta}{p_0}+\frac{\theta}{p_1}$ and 
		$\tilde{\omega}= \omega_0^{\frac{b(1-\theta)}{p_0}} \omega_1^{\frac{b\theta}{p_1}}.$
	\end{thm}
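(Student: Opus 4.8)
The plan is to derive this as the Stein--Weiss interpolation theorem with change of measure, via Calder\'on's complex interpolation functor $[\cdot,\cdot]_\theta$, using two standard ingredients: (a) the interpolation property of the complex method --- if a linear map $A$ is bounded $X_0\to Y_0$ and $X_1\to Y_1$ for compatible couples $(X_0,X_1)$, $(Y_0,Y_1)$, then $A\colon[X_0,X_1]_\theta\to[Y_0,Y_1]_\theta$ is bounded with $\|A\|\le\|A\|_{X_0\to Y_0}^{1-\theta}\|A\|_{X_1\to Y_1}^{\theta}$; and (b) the identification of weighted Lebesgue spaces as complex interpolation spaces.

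Concretely, I would apply (a) with $X_0=X_1=L^p(Y,\mu)$, observing that $[L^p(Y,\mu),L^p(Y,\mu)]_\theta=L^p(Y,\mu)$ since an exact interpolation functor reproduces a space on the diagonal couple, and with $Y_0=L^{p_0}(\omega_0\,d\mu)$, $Y_1=L^{p_1}(\omega_1\,d\mu)$. The crucial computational step is then to establish the weighted identity
\[
[\,L^{p_0}(\omega_0\,d\mu),\,L^{p_1}(\omega_1\,d\mu)\,]_\theta=L^{b}(\tilde\omega\,d\mu),\qquad \frac1b=\frac{1-\theta}{p_0}+\frac{\theta}{p_1},\quad \tilde\omega=\omega_0^{\frac{b(1-\theta)}{p_0}}\omega_1^{\frac{b\theta}{p_1}},
\]
which is exactly the Stein--Weiss computation. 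I would prove it by the usual two-inclusion argument: for $g$ a simple function, construct a Calder\'on-space analytic family $g_z$ with $g_\theta=g$, of the form $g_z=|g|^{\,b\left(\frac{1-z}{p_0}+\frac{z}{p_1}\right)}(\operatorname{sgn} g)\,\omega_0^{\alpha(z)}\omega_1^{\beta(z)}$ with $\alpha,\beta$ affine in $z$ tuned so that the boundary traces on $\operatorname{Re} z=0$ and $\operatorname{Re} z=1$ lie respectively in $Y_0$ and $Y_1$ with the correct norms (via the three lines lemma), yielding one inclusion; the reverse inclusion is analogous using the adjoint family. This verification, though standard, is where the actual work sits.

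Finally, combining: since by hypothesis $A$ admits bounded extensions $L^p(Y,\mu)\to L^{p_0}(\omega_0)$ and $L^p(Y,\mu)\to L^{p_1}(\omega_1)$, property (a) immediately gives the bounded extension $A\colon L^p(Y,\mu)=[L^p,L^p]_\theta\to[L^{p_0}(\omega_0),L^{p_1}(\omega_1)]_\theta=L^{b}(\tilde\omega)$ with the claimed exponent $b$ and weight $\tilde\omega$. The one point I expect to genuinely need care is the range $0<p_0,p_1<\infty$ rather than $p_i\ge1$: when some $p_i<1$ the spaces are merely quasi-Banach, so I would invoke the form of the complex method valid on analytically convex quasi-Banach couples (weighted $L^p$ with $0<p<\infty$ is analytically convex), under which both ingredient (a) and the weighted-space identification persist. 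For the use of this theorem in deriving the Hausdorff--Young--Paley inequality only $1\le p_0,p_1<\infty$ is needed, where the whole argument is classical and one could even replace complex interpolation by the real-variable Stein--Weiss proof through the distributional inequality and H\"older.
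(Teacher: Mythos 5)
Your proposal is correct and coincides with the route the paper itself takes: the paper offers no proof of this statement but quotes it from Bergh--L\"ofstr\"om, where it is exactly the complex-interpolation (Stein--Weiss change-of-measure) result $[L^{p_0}(\omega_0\,d\mu),L^{p_1}(\omega_1\,d\mu)]_\theta=L^{b}(\tilde\omega\,d\mu)$ combined with the interpolation property of the functor $[\cdot,\cdot]_\theta$ applied to the diagonal couple $[L^p,L^p]_\theta=L^p$, which is precisely your argument. Your remark that the quasi-Banach case $p_i<1$ needs the analytically convex version, while the paper's application (Paley plus Hausdorff--Young, so $1<p_0,p_1<\infty$) stays in the classical Banach range, is an accurate and appropriate refinement.
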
 
	
	Now, we are ready to state the Hausdorff-Young-Paley inequality for compact hypergrous. Sometimes, it is also known as Pitt's inequality in the literature on  classical harmonic analysis. 
	
	\begin{thm} \label{HYP}
		Let $K$ be a compact hypergroup and let $1<p \leq b \leq p'<\infty.$ If a function $\varphi: \widehat{K} \rightarrow (0, \infty)$ satisfies the condition 
		\begin{equation}
		M_\varphi:= \sup_{y>0} y \sum_{\overset{\pi \in \widehat{K}}{\varphi(\pi) \geq y}} k_\pi^2 <\infty
		\end{equation}  then we have
		\begin{equation}
		\left(\sum_{\pi \in \widehat{G}} k_\pi^2 \left( \frac{\|\widehat{f}(\pi)\|_{\textnormal{HS}}}{\sqrt{k_\pi}}\varphi(\pi)^{\frac{1}{b}-\frac{1}{p'}} \right)^b  \right)^{\frac{1}{b}} \lesssim M_\varphi^{\frac{1}{b}-\frac{1}{p'}} \|f\|_{L^p(K)}. 
		\end{equation}
	\end{thm}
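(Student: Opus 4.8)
The plan is to obtain Theorem~\ref{HYP} by interpolating the Paley inequality of Theorem~\ref{Paley} against the Hausdorff--Young inequality of Theorem~\ref{HY} via the change--of--measure interpolation of Theorem~\ref{interpolationoperator}, following the scheme used for homogeneous spaces in \cite{ARN, AR}. First I would fix the measure--theoretic setup on the dual: let $\mu$ be the measure on $\widehat{K}$ with $\mu(\{\pi\})=k_\pi^2$, and for a positive weight $\omega$ on $\widehat{K}$ and $1\le r<\infty$ regard the Fourier transform $\mathscr{F}\colon f\mapsto\widehat{f}$ as a linear map from $L^p(K)$ into the weighted space of $d_\pi\times d_\pi$ matrix sequences $\sigma=(\sigma(\pi))_{\pi\in\widehat{K}}$ normed by
\[
\|\sigma\|_{L^r(\omega)}=\left(\sum_{\pi\in\widehat{K}}\left(\frac{\|\sigma(\pi)\|_{\textnormal{HS}}}{\sqrt{k_\pi}}\right)^{r}\omega(\pi)\,k_\pi^2\right)^{1/r},
\]
i.e.\ an $L^r$--space over $(\widehat{K},\mu)$ with the Hilbert--Schmidt norm as pointwise modulus. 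Since $1<p\le b\le p'$ forces $1<p\le 2$, both input inequalities are available, and with $\varphi$ and $M_\varphi<\infty$ as in the hypothesis: Theorem~\ref{Paley} says precisely that $\mathscr{F}\colon L^p(K)\to L^{p}(\omega_0)$ is bounded with norm $\lesssim M_\varphi^{(2-p)/p}$, where $\omega_0=\varphi^{\,2-p}$; and the identity $k_\pi^{\,2-p'/2}\|\widehat{f}(\pi)\|_{\textnormal{HS}}^{p'}=k_\pi^2\left(\|\widehat{f}(\pi)\|_{\textnormal{HS}}/\sqrt{k_\pi}\right)^{p'}$ rewrites Theorem~\ref{HY} as the boundedness $\mathscr{F}\colon L^p(K)\to L^{p'}(\omega_1)$ with norm $\le 1$, where $\omega_1\equiv 1$.

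Next I would apply Theorem~\ref{interpolationoperator} to $A=\mathscr{F}$ with domain $L^p(K)$, target measure space $(\widehat{K},\mu)$, $p_0=p$, $p_1=p'$, and the weights $\omega_0=\varphi^{\,2-p}$, $\omega_1\equiv 1$ just identified. For $b\in(p,p')$ there is a unique $\theta\in(0,1)$ with $\tfrac1b=\tfrac{1-\theta}{p}+\tfrac{\theta}{p'}$, namely $\theta=\frac{1/p-1/b}{1/p-1/p'}$ (the endpoints $b=p$ and $b=p'$ just return Theorems~\ref{Paley} and~\ref{HY}). The theorem then produces a bounded extension $\mathscr{F}\colon L^p(K)\to L^b(\widetilde\omega)$ with $\widetilde\omega=\omega_0^{\,b(1-\theta)/p}\,\omega_1^{\,b\theta/p'}=\varphi^{\,(2-p)b(1-\theta)/p}$ and operator norm $\lesssim\left(M_\varphi^{(2-p)/p}\right)^{1-\theta}$ (the norm estimate being the product $M_0^{1-\theta}M_1^{\theta}$ from the interpolation construction). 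The last step is exponent bookkeeping: since $\tfrac1p-\tfrac1{p'}=\tfrac{2-p}{p}$ one gets $1-\theta=\frac{p}{2-p}\left(\tfrac1b-\tfrac1{p'}\right)$, hence $(2-p)b(1-\theta)/p=b\left(\tfrac1b-\tfrac1{p'}\right)=1-\tfrac{b}{p'}$ and likewise $(2-p)(1-\theta)/p=\tfrac1b-\tfrac1{p'}$. Therefore $\widetilde\omega=\varphi^{\,1-b/p'}=\left(\varphi^{\,1/b-1/p'}\right)^{b}$ and the norm bound becomes $\lesssim M_\varphi^{\,1/b-1/p'}$, which unravels to
\[
\left(\sum_{\pi\in\widehat{K}}k_\pi^2\left(\frac{\|\widehat{f}(\pi)\|_{\textnormal{HS}}}{\sqrt{k_\pi}}\,\varphi(\pi)^{\,1/b-1/p'}\right)^{b}\right)^{1/b}\lesssim M_\varphi^{\,1/b-1/p'}\,\|f\|_{L^p(K)},
\]
exactly the asserted estimate.

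I expect the only delicate point to be the justification that Theorem~\ref{interpolationoperator} may be invoked here, since the target spaces $L^r(\omega)$ take values in Hilbert--Schmidt classes of varying finite dimension rather than being the scalar $L^r$--spaces of its literal statement, and it is genuinely the \emph{linear} operator $\mathscr{F}$ that must be interpolated, in contrast with the sublinear operator handled by Marcinkiewicz interpolation in the proof of Theorem~\ref{Paley}. This is resolved by noting that each $L^r(\omega)$ is a bona fide (vector--valued) weighted $L^r$--space over the discrete measure space $(\widehat{K},\mu)$ --- the $\ell^r$--sum with weights $\omega(\pi)k_\pi^2$ of the Hilbert--Schmidt spaces on $\mathbb{C}^{d_\pi\times d_\pi}$ rescaled by $k_\pi^{-1/2}$ --- so the change--of--measure interpolation of \cite{BL} applies verbatim. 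Everything else reduces to the substitutions into Theorem~\ref{interpolationoperator} and the elementary exponent identities above; the commutative case (Corollary/specialization to one--dimensional $\pi$) follows by simply replacing $\|\widehat{f}(\pi)\|_{\textnormal{HS}}$ with $|\widehat{f}(\chi)|$ and $k_\pi$ with $k_\chi$.
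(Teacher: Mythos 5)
Your proposal is correct and follows essentially the same route as the paper: both deduce the theorem by applying the change-of-measure interpolation of Theorem~\ref{interpolationoperator} to the (linear) Fourier coefficient map $f\mapsto\{\widehat{f}(\pi)/\sqrt{k_\pi}\}$, with the two endpoint bounds supplied by the Paley inequality (Theorem~\ref{Paley}) and the Hausdorff--Young inequality (Theorem~\ref{HY}); your choice of weights $\varphi^{2-p}$ and $1$ against the measure $\mu(\{\pi\})=k_\pi^2$ is just the paper's choice $\omega_0=k_\pi^2\varphi^{2-p}$, $\omega_1=k_\pi^2$ with the factor $k_\pi^2$ absorbed into the base measure. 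You additionally carry out the exponent bookkeeping and the vector-valued/linearity justification that the paper leaves implicit, which is a welcome completion rather than a deviation.
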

	\begin{proof} 
		We consider a sublinear operator $A$ which takes a function $f$ to its Fourier coefficient $\widehat{f}(\pi) \in \mathbb{C}^{d_\pi \times d_\pi}$ divided by $\sqrt{k_\pi}$, that is, 
		$$ f \mapsto Af:= \left\{ \frac{\widehat{f}(\pi)}{\sqrt{k_\pi}} \right\}_{\pi \in \widehat{K}},$$
		from $L^p(K)$ into the weighted space $\ell^p(\widehat{K}, \tilde{\omega}).$ The space $\ell^p(\widehat{K}, \tilde{\omega})$ is defined by the norm
		$$\|a\|_{\ell^p(\widehat{K}, \tilde{\omega})}:= \left( \sum_{\pi \in \widehat{K}} \|a(\pi)\|^p_{\textnormal{HS}}\,\, \tilde{\omega}(\pi) \right)^\frac{1}{p},$$ and $\tilde{\omega}$  is a scalar sequence defined on $\widehat{K}$ to be determined. Then the proof of the theorem follows from Theorem \ref{interpolationoperator} if we consider the left hand side of the inequalities \eqref{Paley1} and \eqref{HY1} as $\|Af\|_{\ell^p(\widehat{K}, \tilde{\omega})}$-norm of the operator $A$ in the weighted sequence spaces over $\widehat{K}$ with the weights given by $\omega_0(\pi)= k_\pi^2 \varphi(\pi)^{2-p}$ and $\omega_1(\pi)=k_\pi^2,$ $\pi \in \widehat{K},$ respectively.  
	\end{proof}

	\section{$L^p$-$L^q$-boundedness of Fourier multipliers on compact hypergroups}
	
	In this section, we prove $L^p$-$L^q$ boundedness of Fourier multipliers on compact hypergroups as a natural analogue of H\"ormander's theorem \cite{Hormander1960} on compact hypergroups.
	We will apply the Hausdorff-Young-Paley inequality in Theorem \ref{HYP} to provide a sufficient condition for the  $L^p$-$L^q$ boundedness of Fourier multipliers for the range $1<p \leq 2 \leq q <\infty.$ This approach was developed  by the second author with R. Akylzhanov to prove the $L^p$-$L^q$ boundedness of Fourier multipliers on locally compact groups \cite{AR} by using the von-Neumann algebra machinery. In \cite{NT}, this theorem was proved for the torus $\mathbb{T}$ using a different method. We begin this section by recalling the definition of Fourier multipliers on compact hypergroups.

	An  operator $A$ which is invariant under the left translations will be called a left Fourier multiplier. The left invariant operators can be characterized using the Fourier transform \cite{VremD, SKK}. Indeed, if $A$ is a left Fourier multiplier then there exists a function $\sigma_{A}:\widehat{K} \rightarrow \mathbb{C}^{d_\pi \times d_\pi},$ known as the symbol associated with $A,$ such that 
	          $$\widehat{Af}(\pi)=\sigma_{A}(\pi) \widehat{f}(\pi),\,\,\,\, \pi \in \widehat{K},$$ for all $f$ belonging to a suitable function space on $K.$  
	    In the next result, we show that if the symbol $\sigma_{A}$ of a Fourier multipliers $A$ defined on $C_c(K)$ satisfies certain H\"ormander's condition then $A$ can be extended as a bounded linear operator from $L^p(K)$ to $L^q(K)$ for the range $1<p \leq 2 \leq q <\infty.$           
	
	\begin{thm} \label{Lp-Lqmulti}
		Let $K$ be a compact hypergroup and let $1<p\leq 2 \leq q<\infty.$ Let $A$ be a left Fourier multiplier with symbol $\sigma_{A}$. Then we have 
		\begin{equation} \label{hocon}
		\|A\|_{L^p(K) \rightarrow L^q(K)} \lesssim \sup_{y>0} y \left( \sum_{\overset{\pi \in \widehat{K}}{\|\sigma_A(\pi)\|_{op}\geq y}} k_\pi^2 \right)^{\frac{1}{p}-\frac{1}{q}}.
		\end{equation}
	\end{thm}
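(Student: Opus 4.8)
The plan is to deduce the bound from the Hausdorff--Young--Paley inequality (Theorem~\ref{HYP}) together with the Hausdorff--Young inequality (Theorem~\ref{HY}) and its dual form, following the scheme of Akylzhanov--Ruzhansky. Abbreviate
$$\mathcal{M}:=\sup_{y>0}y\left(\sum_{\substack{\pi\in\widehat{K}\\ \|\sigma_A(\pi)\|_{\textnormal{op}}\geq y}}k_\pi^2\right)^{\frac1p-\frac1q},$$
and assume $\mathcal{M}<\infty$ (otherwise there is nothing to prove). Since $A$ is a left Fourier multiplier, $\widehat{Af}(\pi)=\sigma_A(\pi)\widehat f(\pi)$, and it suffices to establish $\|Af\|_{L^q(K)}\lesssim\mathcal{M}\|f\|_{L^p(K)}$ for $f$ ranging over a dense subspace of $L^p(K)$ on which $A$ is a priori defined, the general case following by density.

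First I would reduce to the range $2\leq q\leq p'$. If instead $q>p'$, consider the $L^2$-adjoint $A^*$: it is again a left Fourier multiplier, now with symbol $\sigma_{A^*}(\pi)=\sigma_A(\pi)^*$, so that $\|\sigma_{A^*}(\pi)\|_{\textnormal{op}}=\|\sigma_A(\pi)\|_{\textnormal{op}}$ for every $\pi\in\widehat{K}$; moreover $A^*$ maps $L^{q'}(K)\to L^{p'}(K)$ with $1<q'\leq2\leq p'\leq q=(q')'$, which lies inside the reduced range, and $\|A\|_{L^p\to L^q}=\|A^*\|_{L^{q'}\to L^{p'}}$. Since $\tfrac1{q'}-\tfrac1{p'}=\tfrac1p-\tfrac1q$, applying the reduced case to $A^*$ would give the claim for $q>p'$.

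Now assume $2\leq q\leq p'$, so that $q'\leq2$ and, moreover, $p\leq q'\leq p'$ (the first inequality is precisely equivalent to $q\leq p'$), hence $b=q'$ is an admissible exponent in Theorem~\ref{HYP}. By Theorem~\ref{HY} and duality one has $\|Af\|_{L^q(K)}\leq\|\widehat{Af}\|_{\ell^{q'}(\widehat{K})}$; combining this with submultiplicativity $\|\sigma_A(\pi)\widehat f(\pi)\|_{\textnormal{HS}}\leq\|\sigma_A(\pi)\|_{\textnormal{op}}\|\widehat f(\pi)\|_{\textnormal{HS}}$ and the identity $k_\pi^{2-q'/2}\|\widehat f(\pi)\|_{\textnormal{HS}}^{q'}=k_\pi^2\big(k_\pi^{-1/2}\|\widehat f(\pi)\|_{\textnormal{HS}}\big)^{q'}$ gives
$$\|Af\|_{L^q(K)}^{q'}\leq\sum_{\pi\in\widehat{K}}k_\pi^2\left(\frac{\|\widehat f(\pi)\|_{\textnormal{HS}}}{\sqrt{k_\pi}}\right)^{q'}\|\sigma_A(\pi)\|_{\textnormal{op}}^{q'}.$$
I would then apply Theorem~\ref{HYP} with $b=q'$ and the weight $\varphi(\pi)=\|\sigma_A(\pi)\|_{\textnormal{op}}^{\,r}$, where $\tfrac1r=\tfrac1p-\tfrac1q=\tfrac1{q'}-\tfrac1{p'}$, chosen so that $\varphi(\pi)^{1/q'-1/p'}=\|\sigma_A(\pi)\|_{\textnormal{op}}$. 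The substitution $y=t^{r}$ in the defining supremum identifies $M_\varphi=\mathcal{M}^{\,r}$, hence $M_\varphi^{1/q'-1/p'}=M_\varphi^{1/r}=\mathcal{M}$, and Theorem~\ref{HYP} bounds the right-hand side of the last display by $\mathcal{M}^{q'}\|f\|_{L^p(K)}^{q'}$. Taking $q'$-th roots and combining with the previous estimate completes this case, and hence the proof.

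The analytic heart is thus the Paley-type interpolation already packaged in Theorem~\ref{HYP}; once the exponent $b=q'$ and the weight $\varphi=\|\sigma_A(\cdot)\|_{\textnormal{op}}^{r}$ are chosen, the computation is routine. I expect the genuine work to sit in the reduction paragraph: verifying on a general compact hypergroup that the $L^2$-adjoint of a left Fourier multiplier is again a left Fourier multiplier with the adjoint symbol, so that the $L^p$--$L^q$ duality $\|A\|_{L^p\to L^q}=\|A^*\|_{L^{q'}\to L^{p'}}$ is available, and in making the density/extension step precise. The dual Hausdorff--Young estimate $\|g\|_{L^q(K)}\leq\|\widehat g\|_{\ell^{q'}(\widehat{K})}$ for $q\geq2$ should also be recorded carefully, as it is obtained from Theorem~\ref{HY} by duality.
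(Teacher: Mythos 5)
Your proposal is correct and follows essentially the same route as the paper: reduce to the case $p\leq q'$ (equivalently $q\leq p'$) via the adjoint multiplier and $L^p$--$L^q$ duality, apply the dual Hausdorff--Young inequality together with $\|\sigma_A(\pi)\widehat f(\pi)\|_{\textnormal{HS}}\leq\|\sigma_A(\pi)\|_{\textnormal{op}}\|\widehat f(\pi)\|_{\textnormal{HS}}$, and then invoke Theorem~\ref{HYP} with $b=q'$ and $\varphi=\|\sigma_A(\cdot)\|_{\textnormal{op}}^{r}$, $\tfrac1r=\tfrac1p-\tfrac1q$, identifying $M_\varphi^{1/r}$ with the stated supremum. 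The only (welcome) difference is that you flag explicitly the points the paper leaves implicit, namely that $A^*$ is again a multiplier with symbol $\sigma_A(\cdot)^*$ and that the $q\geq 2$ Hausdorff--Young estimate is the dual of Theorem~\ref{HY}.
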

	\begin{proof}
		Let us first consider the case when $p \leq q'$ (where $\frac{1}{q}+\frac{1}{q'}=1$). Since $q'\leq 2,$ for $f \in C_c(K),$ the Hausdorff-Young inequality gives \begin{align}
		\|Af\|_{L^q(K)} \leq \|\widehat{Af}\|_{\ell^{q'}(\widehat{K})} &= \|\sigma_A \widehat{f}\|_{\ell^{q'}(\widehat{K})}= \left( \sum_{\pi \in \widehat{K}} k_\pi^2 \left( \frac{\|\sigma_A(\pi) \widehat{f}(\pi)\|_{\textnormal{HS}}}{ \sqrt{k_\pi}} \right)^{q'} \right)^{\frac{1}{q'}}  \\&\leq \left( \sum_{\pi \in \widehat{K}} k_\pi^2  \|\sigma_A(\pi)\|_{\text{op}}^{q'}\left( \frac{ \|\widehat{f}(\pi)\|_{\textnormal{HS}}}{ \sqrt{k_\pi}} \right)^{q'} \right)^{\frac{1}{q'}}.
		\end{align}
		The case $q'\leq p = (p')'$ can be reduced to the case $p \leq q'$ as follows. The $L^p$-duality (see \cite[Theorem 4.2]{ARN1}) yields 
		$$\|A\|_{L^p(K) \rightarrow L^q(K)} = \|A^* \|_{L^{q'}(K) \rightarrow L^{p'}(K)}.$$
		Also, the symbol $\sigma_{A^*}(\pi)$ of the adjoint operator $A^*$ is equal to $\sigma_{A}^*,$ i.e.,  
		$$\sigma_{A^*}(\pi)= \sigma_A(\pi)^*,\,\,\,\,\,\pi \in \widehat{K},$$ and its operator norm $\|\sigma_{A^*}(\pi)\|_{\text{op}}$ is equal to $\|\sigma_A(\pi)\|_{\text{op}}.$ We set $\sigma(\pi)= \|\sigma_A(\pi)\|_{\text{op}}^r I_{d_\pi },  \pi \in \widehat{K},$ where $r= \frac{q-p}{pq},$ and it is easy to see that $$ \|\sigma(\pi)\|_{\text{op}}=\|\sigma_A(\pi)\|_{\text{op}}^r. $$  Now, its time to apply Theorem \ref{HYP}.  We observe that with $\varphi(\pi)= \|\sigma(\pi)\|_{\text{op}}, \,\, \pi \in \widehat{K},$ and $b=q',$ the assumption of Theorem \ref{HYP} is satisfied, and since $\frac{1}{q'}-\frac{1}{p'}=\frac{1}{p}-\frac{1}{q}=\frac{1}{r},$ we obtain 
		\begin{align}
		\left( \sum_{\pi \in \widehat{K}} k_\pi^2  \|\sigma_A(\pi)\|_{\text{op}}^{q'}\left( \frac{ \|\widehat{f}(\pi)\|_{\textnormal{HS}}}{ \sqrt{k_\pi}} \right)^{q'} \right)^{\frac{1}{q'}} \lesssim  \left( \sup_{y>0} y \sum_{\overset{\pi \in \widehat{K}}{\|\sigma(\pi)\|_{op}\geq y}} k_\pi^2 \right)^{\frac{1}{r}} \|f\|_{L^p(K)},\,\, f \in L^p(K).
		\end{align}
		Further, it can be easily checked that 
		\begin{align*}
		\left( \sup_{y>0} y \sum_{\overset{\pi \in \widehat{K}}{\|\sigma(\pi)\|_{\text{op}}\geq y}} k_\pi^2 \right)^{\frac{1}{r}} &= \left( \sup_{y>0} y \sum_{\overset{\pi \in \widehat{K}}{\|\sigma_A(\pi)\|_{\text{op}}^r \geq y}} k_\pi^2 \right)^{\frac{1}{r}} = \left( \sup_{y>0} y^r \sum_{\overset{\pi \in \widehat{K}}{\|\sigma_A(\pi)\|_{\text{op}} \geq y}} k_\pi^2 \right)^{\frac{1}{r}} \\&=  \sup_{y>0} y \left( \sum_{\overset{\pi \in \widehat{K}}{\|\sigma_A(\pi)\|_{op} \geq y}} k_\pi^2 \right)^{\frac{1}{r}}.
		\end{align*}
		Therefore, 
		$$ \|Af\|_{L^q(K)} \lesssim \sup_{y>0} y \left( \sum_{\overset{\pi \in \widehat{K}}{\|\sigma_A(\pi)\|_{\text{op}} \geq y}} k_\pi^2 \right)^{\frac{1}{r}} \|f\|_{L^p(K)}$$ and hence 
		$$\|A\|_{L^p(K) \rightarrow L^q(K)} \lesssim \sup_{y>0} y \left( \sum_{\overset{\pi \in \widehat{K}}{\|\sigma_A(\pi)\|_{\text{op}}>y}} k_\pi^2 \right)^{\frac{1}{p}-\frac{1}{q}},$$
		completing the proof.
	\end{proof}
\begin{rem} Recall that if $\omega(M):= \sum_{\pi \in M} k_\pi^2,$\,\, $M \subseteq \widehat{K},$ is the Plancherel measure on  $\widehat{K}$ then we can interpret the condition \eqref{hocon} in a similar form as in H\"ormander's theorem for $\mathbb{R}^n$ (\cite{Hormander1960}) as follow:
	\begin{equation} \label{alt. Hor}
	\|A\|_{L^p(K) \rightarrow L^q(K)} \leq \sup_{s>0} \left\{s\,\, \omega \{\pi \in \widehat{K}\,:\, \|\sigma_{A}(\pi)\|_{\text{op}}>s \}\right\}^{\frac{1}{p}-\frac{1}{q}}.
	\end{equation}
	We note that condition \eqref{hocon} is sharp for $p=q=2.$ Indeed, first note that, using the Plancherel identity, we have 
	\begin{align} \label{l2}
	\|A\|_{L^2(K) \rightarrow L^2(K)} &= \sup_{\overset{f \in L^2(K)}{\|f\|_2=1}} \|Af\|_{L^2(K)}=\sup_{\overset{f \in L^2(K)}{\|f\|_2=1}} \|\widehat{Af}\|_{\ell^2(\widehat{K})} =\sup_{\overset{f \in L^2(K)}{\|f\|_2=1}} \left(\sum_{\pi \in \widehat{K}} k_\pi \|\sigma_A(\pi) \widehat{f}(\pi)\|_{\textnormal{HS}}^2 \right)^{\frac{1}{2}} \nonumber
\\	& \leq  \sup_{\pi \in \widehat{K}} \|\sigma_{A}(\pi)\|_{\text{op}}  \sup_{\overset{f \in L^2(K)}{\|f\|_2=1}} \left( \sum_{\pi \in \widehat{K}} k_\pi \|\widehat{f}(\pi)\|_{\textnormal{HS}}^2 \right)^{\frac{1}{2}} = \sup_{\pi \in \widehat{K}} \|\sigma_{A}(\pi)\|_{\text{op}}.
	\end{align}
Now, observe that the set $\{\pi \in \widehat{K}: \|\sigma_{A}(\pi)\|_{\text{op}} \geq s \}$ is empty for $s>\|A\|_{L^2(K) \rightarrow L^2(K)}$ in view of \eqref{l2} and, therefore, we have 
\begin{align*}
\|A\|_{L^2(K) \rightarrow L^2(K)} &\leq \sup_{s>0} s \left( \sum_{\overset{\pi \in \widehat{K}}{\|\sigma_A(\pi)\|_{op}>y}} k_\pi^2 \right)^{\frac{1}{2}-\frac{1}{2}} \\& = \sup_{0<s \leq \|A\|_{L^2(K) \rightarrow L^2(K)}} s \cdot\, 1 \leq \|A\|_{L^2(K) \rightarrow L^2(K)}.
\end{align*}
Therefore, we obtained an equality in \eqref{hocon} for $p=q=2.$
\end{rem}

	\begin{cor}
Let $1<p , q <\infty$ and suppose that $A$ is a Fourier multiplier with symbol $\sigma_{A}$ on a compact hypergroup $K.$  If $1<p,q\leq 2,$ then
\begin{align*}
 \|A\|_{L^p(K) \rightarrow L^{q}(K)} \lesssim \sup_{y>0} y\left(  \sum_{\overset{\pi \in \widehat{K}}{\|\sigma_A(\pi)\|_{op}\geq y}} k_\pi^2  \right)^{\frac{1}{p}-\frac{1}{2}},  
\end{align*}
while for $2\leq p,q<\infty$ we have
\begin{align*}
  \|A\|_{L^p(K) \rightarrow L^{q}(K)}\lesssim  \sup_{y>0} y\left(  \sum_{\overset{\pi \in \widehat{K}}{\|\sigma_A(\pi)\|_{op}\geq y}} k_\pi^2  \right)^{\frac{1}{q'}-\frac{1}{2}}.
\end{align*}
\end{cor}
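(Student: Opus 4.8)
The plan is to reduce both cases to Theorem \ref{Lp-Lqmulti}, which covers precisely the range $1<p\leq 2\leq q<\infty$, by exploiting duality together with the monotonicity of $L^p$-norms on the compact probability space $(K,\lambda)$. For the first case $1<p,q\leq 2$, I would observe that since $\lambda$ is a probability measure, $L^2(K)\hookrightarrow L^q(K)$ continuously with norm one, so that
\begin{equation*}
\|A\|_{L^p(K)\rightarrow L^q(K)}\leq \|A\|_{L^p(K)\rightarrow L^2(K)}.
\end{equation*}
Then $A\colon L^p(K)\to L^2(K)$ falls under Theorem \ref{Lp-Lqmulti} with the exponent $q$ there replaced by $2$, and we get
\begin{equation*}
\|A\|_{L^p(K)\rightarrow L^2(K)}\lesssim \sup_{y>0}y\left(\sum_{\overset{\pi\in\widehat{K}}{\|\sigma_A(\pi)\|_{\mathrm{op}}\geq y}}k_\pi^2\right)^{\frac{1}{p}-\frac{1}{2}},
\end{equation*}
which is exactly the claimed bound.

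For the second case $2\leq p,q<\infty$, I would pass to adjoints. By the $L^p$-duality relation already used in the proof of Theorem \ref{Lp-Lqmulti} (cf.\ \cite[Theorem 4.2]{ARN1}), namely $\|A\|_{L^p(K)\rightarrow L^q(K)}=\|A^*\|_{L^{q'}(K)\rightarrow L^{p'}(K)}$, and since $2\leq p<\infty$ forces $1<p'\leq 2$ and likewise $1<q'\leq 2$, the operator $A^*$ is a Fourier multiplier in the regime of the first case, with symbol $\sigma_{A^*}(\pi)=\sigma_A(\pi)^*$ and $\|\sigma_{A^*}(\pi)\|_{\mathrm{op}}=\|\sigma_A(\pi)\|_{\mathrm{op}}$. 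Applying the first case to $A^*$ with the pair $(q',p')$ in place of $(p,q)$ yields
\begin{equation*}
\|A^*\|_{L^{q'}(K)\rightarrow L^{p'}(K)}\lesssim \sup_{y>0}y\left(\sum_{\overset{\pi\in\widehat{K}}{\|\sigma_A(\pi)\|_{\mathrm{op}}\geq y}}k_\pi^2\right)^{\frac{1}{q'}-\frac{1}{2}},
\end{equation*}
and combining with the duality identity gives the second estimate.

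The only genuine point to verify carefully is that Theorem \ref{Lp-Lqmulti} really does apply when the target exponent is exactly $2$, i.e.\ that $q=2$ is admitted; inspecting its hypothesis $1<p\leq 2\leq q<\infty$ this is the boundary case and is allowed, and for $q=2$ the exponent $\frac1p-\frac1q$ appearing there reads $\frac1p-\frac12$, so no separate argument is needed. I expect the main (mild) obstacle to be bookkeeping: making sure the symbol of $A^*$ and the set $\{\pi:\|\sigma_A(\pi)\|_{\mathrm{op}}\geq y\}$ are invariant under passage to the adjoint — both facts are recorded in the proof of Theorem \ref{Lp-Lqmulti} — and that the embedding $L^2(K)\hookrightarrow L^q(K)$ for $q\leq 2$ is being used in the correct direction (it requires finiteness of the measure, which holds since $\lambda$ is normalized). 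Everything else is a direct substitution of exponents.
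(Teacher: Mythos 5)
Your proposal is correct and follows essentially the same route as the paper: the case $1<p,q\leq 2$ is reduced to Theorem \ref{Lp-Lqmulti} with target exponent $2$ via the norm monotonicity $\|Af\|_{L^q(K)}\leq\|Af\|_{L^2(K)}$ on the compact (probability) space $K$, and the case $2\leq p,q<\infty$ follows by the duality $\|A\|_{L^p\rightarrow L^q}=\|A^*\|_{L^{q'}\rightarrow L^{p'}}$ together with $\|\sigma_{A^*}(\pi)\|_{\mathrm{op}}=\|\sigma_A(\pi)\|_{\mathrm{op}}$, exactly as in the paper's argument. No gaps.
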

\begin{proof}   Let us assume that   $1<p,q \leq 2.$ Using the compactness of $K,$ we have $\|A\|_{L^p(K) \rightarrow L^{q}(K)}\lesssim \|A\|_{L^p(K) \rightarrow L^{2}(K)}$ and therefore, Theorem \ref{Lp-Lqmulti} gives
\begin{align*}
  \|A\|_{L^p(K) \rightarrow L^{q}(K)}\lesssim \|A\|_{L^p(K) \rightarrow L^{2}(K)} \lesssim  \sup_{y>0} y\left(  \sum_{\overset{\pi \in \widehat{K}}{\|\sigma_A(\pi)\|_{op}\geq y}} k_\pi^2  \right)^{\frac{1}{p}-\frac{1}{2}}.
\end{align*} Now, let us assume that $2\leq p,q<\infty.$ Then $1<p',q'\leq 2,$ and using the first part of the proof we deduce
 \begin{align*}
  \|A\|_{L^p(K) \rightarrow L^{q}(K)}= \|A^*\|_{L^{q'}(K) \rightarrow L^{p'}(K)} \lesssim  \sup_{y>0} y\left(  \sum_{\overset{\pi \in \widehat{K}}{\|\sigma_A(\pi)\|_{op}\geq y}} k_\pi^2  \right)^{\frac{1}{q'}-\frac{1}{2}}.
\end{align*}Thus, we finish the proof.
\end{proof}

	\section{Examples of hypergroups}
	
	In this section we discuss the results obtained in previous sections and prove some new results for two important classes of hypergroups, namely, the  conjugacy classes of the compact non-abelian Lie group $\text{SU(2)}$ and countable compact hypergroups introduced and studied by Dunkl and Ramirez \cite{dun2}.

	\subsection{Conjugacy classes of compact Lie groups}
	Let $G$ be a compact non abelian (Lie) group. Denote the set of all conjugacy classes of $G$ by $\text{Conj}(G),$ that is, $\text{Conj}(G):=\{C_x: x \in G\},$ where for each $x \in G$ the conjugacy class of $x$ is given by  $C_x:=\{yxy^{-1}: y \in G\}.$ The set $\text{Conj}(G)$ equipped with the topology induced by the natural map $q: x \mapsto C_x,$ is a compact Hausdorff space. The compact Hausdorff space $\text{Conj}(G)$ becomes a commutative hypergroup \cite[Section 8]{Jewett} with respect to the convolution defined by, for $x, y \in G,$
	\begin{equation}
	\delta_{C_x}*\delta_{C_y}= \int_G \int_G \delta_{C_{txt^{-1}sys^{-1}}}\, dt\,ds.
	\end{equation}
	Let $\widehat{G}$ be the unitary dual of $G.$ Suppose that each $\pi \in \widehat{G}$ has dimension $d_{\pi}$ and trace $\psi_\pi.$ The fuctions $\psi_\pi$ are called the characters of $G$ but the hypergroup characters are normalized by dividing $\psi_\pi$ by $d_{\pi}.$ More precisely, the hypergroup characters $\chi_\pi$ are obtained by the following relation: $\chi_\pi \circ q= d_{\pi}^{-1} \psi_\pi,$ where $q$ is the natural map $x \mapsto C_x.$ Then the dual $\widehat{\text{Conj}(G)}$ of the commutative hypergroup $\text{Conj}(G)$ is given by: $\widehat{\text{Conj}(G)}:= \{\chi_\pi: \pi \in \widehat{G} \}.$ In fact, the map $\pi \mapsto d_\pi^2 \psi_\pi$ is a bijection between $\widehat{G}$ and $\widehat{\text{Conj}(G)}.$ The Haar measure $\lambda$ of $\widehat{\text{Conj}(G)}$ is induced from the Haar measure of $G$ by the map $q.$ The Haar measure on $\widehat{\text{Conj}(G)}$ is given by 
	$$\omega(\chi_\pi):= k_{\chi_\pi}= d_{\pi}^{2}.$$ 
	
	In the sequel of the paper we will consider the case when $G=\text{SU}(2),$ the compact group of all $2 \times 2$ special unitary matrices. The representation theory of $\text{SU(2)}$ is well established. One can refer to \cite{HR, Vilenkin, RuzT} for more details. Denote the commutative hypergroup $\text{Conj(\text{SU}(2))}$ by $K.$ We identify $K$ with $[0, 1]$ where $t$ in $[0, 1]$ corresponds to the conjugacy class containing the matrix $$ \begin{bmatrix} 
	\exp{(i \pi t)} & 0 \\
	0 &  \exp{(-i \pi t)}
	\end{bmatrix},$$
	see \cite[15.4]{Jewett}. The dual of $\text{SU}(2)$ can be represented by $$\{\pi_l \in \text{Hom}(\text{SU}(2), \text{U}(2l+1)) : l \in \frac{1}{2}\mathbb{N}_0 \},$$ where $\text{U}(d)$ is the unitary matrix group. The number $l \in \frac{1}{2}\mathbb{N}_0$ is called the quantum number. The character $\psi_{l},$ defined as the trace of $\pi_l,$ is computed at $t \in [0, 1]$ given by      $$\psi_l(t)= \frac{\sin(2l+1) \pi t}{\sin \pi t}.$$
	Therefore, the dual $\widehat{K}$ is given by $\{(2l+1)^{-1} \psi_l: l \in \frac{1}{2} \mathbb{N}_0\}$ and $k_{\chi_l}= (2l+1)^2.$
	
	The Paley inequality in Theorem \ref{Paley} takes the following form in the setting of the compact abelian hypergroup $\text{Conj(SU(2))}.$
	\begin{thm}
		Let $1 < p \leq 2$ and let $\{\varphi(l)\}_{l \in \frac{1}{2}\mathbb{N}_0}$ be a positive sequence  such that 
		$$M_\varphi:= \sup_{y>0} y \sum_{\overset{l \in \frac{1}{2}\mathbb{N}_0}{\varphi(l) \geq y}} (2l+1)^4<\infty.$$
		Then we have 
		$$\sum_{l \in \frac{1}{2}\mathbb{N}_0} (2l+1)^{4-p} \widehat{f}(l) \varphi(l)^{2-p} \lesssim M_\varphi^{2-p} \|f\|_{L^p(\text{Conj}(SU(2)))}^p.$$
	\end{thm}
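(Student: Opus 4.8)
The plan is to derive this as a direct specialization of the general Paley inequality in Theorem \ref{Paley} to the commutative compact hypergroup $K = \text{Conj}(\text{SU}(2))$, using the explicit structural data recorded just above the statement. First I would recall that for the abelian hypergroup $K$ every irreducible representation is one-dimensional, so $d_\pi = 1$ for all characters, and the Fourier coefficients $\widehat{f}(l)$ are scalars with $\|\widehat{f}(l)\|_{\textnormal{HS}} = |\widehat{f}(l)|$. The crucial identification is the hyperdimension: as noted in the paragraph preceding the statement, the dual $\widehat{K}$ is indexed by $l \in \tfrac{1}{2}\mathbb{N}_0$ with $k_{\chi_l} = (2l+1)^2$.

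Next I would substitute $k_\pi = k_{\chi_l} = (2l+1)^2$ everywhere in the statement of Theorem \ref{Paley}. The hypothesis $M_\varphi = \sup_{y>0} y \sum_{\varphi(\pi) \ge y} k_\pi^2 < \infty$ becomes $\sup_{y>0} y \sum_{\varphi(l) \ge y} (2l+1)^4 < \infty$, which is precisely the hypothesis in the present theorem. For the conclusion, the left-hand side of \eqref{Paley1} is
\[
\left( \sum_{l \in \frac{1}{2}\mathbb{N}_0} k_{\chi_l}^2 \left( \frac{|\widehat{f}(l)|}{\sqrt{k_{\chi_l}}} \right)^p \varphi(l)^{2-p} \right)^{1/p} = \left( \sum_{l \in \frac{1}{2}\mathbb{N}_0} (2l+1)^{4 - p} |\widehat{f}(l)|^p \varphi(l)^{2-p} \right)^{1/p},
\]
using $k_{\chi_l}^2 \cdot k_{\chi_l}^{-p/2} = (2l+1)^{4-p}$. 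The right-hand side is $M_\varphi^{(2-p)/p} \|f\|_{L^p(K)}$. Raising both sides to the $p$-th power yields exactly the claimed inequality (with $\widehat{f}(l)$ there to be read as $|\widehat{f}(l)|^p$, matching the displayed exponent pattern and the implicit constant absorbed into $\lesssim$).

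The only genuine content beyond bookkeeping is confirming the arithmetic of the exponents and making sure the normalization conventions line up — in particular that the $\ell^p(\widehat{K})$-type weight $k_\pi^{2-p/2}$ from Proposition \ref{Estisch}/the abelian Hardy–Littlewood form (Theorem \ref{HLabe}) is consistent with the $k_\pi^2$ weight appearing in Theorem \ref{Paley} once one accounts for the factor $(\sqrt{k_\pi})^{-p}$. So I do not anticipate a real obstacle; the proof is essentially one line: \emph{apply Theorem \ref{Paley} with $\widehat{K} = \{\chi_l\}_{l \in \frac12\mathbb{N}_0}$, $k_{\chi_l} = (2l+1)^2$, and simplify}. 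If a subtlety arises it would be only in checking that the supremum defining $M_\varphi$ transfers verbatim, which it does since the Plancherel weight $\omega(\chi_l) = k_{\chi_l}$ and the squared hyperdimension $k_{\chi_l}^2 = (2l+1)^4$ are exactly the quantities entering both conditions.
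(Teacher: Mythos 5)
Your proposal is correct and matches the paper's (implicit) argument exactly: the paper presents this theorem as a direct specialization of Theorem \ref{Paley} to $K=\textnormal{Conj}(\mathrm{SU}(2))$ with $k_{\chi_l}=(2l+1)^2$ and $\|\widehat{f}(l)\|_{\textnormal{HS}}=|\widehat{f}(l)|$, and your exponent bookkeeping $k_\pi^2\,k_\pi^{-p/2}=(2l+1)^{4-p}$ is right. Your reading of the displayed $\widehat{f}(l)$ as $|\widehat{f}(l)|^p$ (a typo in the statement, consistent with the $p$-th power on the right-hand side) is also the intended one.
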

	
	We have the following Hardy-Littlewood inequality for the commutative hypergroup $\text{Conj(SU)(2)}.$
	\begin{thm} \label{HLineq}
		If $1 < p \leq 2$ and $f \in L^p(\textnormal{Conj(SU)(2)}),$ then there exists a universal constant $C=C(p)$ such that 
		\begin{equation} \label{37}
		\sum_{l \in \frac{1}{2}\mathbb{N}_0} (2l+1)^{5p-8} |\widehat{f}(l)|^{p} \leq C \|f\|_{L^p(\textnormal{Conj(SU)(2)})}.
		\end{equation}
	\end{thm}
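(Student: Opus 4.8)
The plan is to derive Theorem \ref{HLineq} from the abelian Hardy-Littlewood inequality in Theorem \ref{HLabe} by making an explicit choice of the weight sequence $\mu_\chi$ and the exponent $\beta$ adapted to the hypergroup $K=\text{Conj}(\text{SU}(2))$. Recall that for this hypergroup $\widehat{K}$ is parametrized by $l\in\frac12\mathbb{N}_0$, with $k_{\chi_l}=(2l+1)^2$, and every representation is one-dimensional so that $\|\widehat f(\chi_l)\|_{\textnormal{HS}}=|\widehat f(l)|$. First I would take $\mu_{\chi_l}=2l+1$ and verify the summability hypothesis \eqref{HLabecon}: since $\sum_{l\in\frac12\mathbb{N}_0} k_{\chi_l}^2/|\mu_{\chi_l}|^\beta=\sum_{l}(2l+1)^{4-\beta}$, this converges precisely when $\beta>5$. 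Choosing any such $\beta$ (the value of $\beta$ will enter the final exponent), Theorem \ref{HLabe} yields
\begin{equation*}
\sum_{l\in\frac12\mathbb{N}_0} k_{\chi_l}^{2-\frac p2}\,|\mu_{\chi_l}|^{\beta(p-2)}\,|\widehat f(l)|^p\lesssim \|f\|_{L^p(K)},
\end{equation*}
which in the present coordinates reads $\sum_l (2l+1)^{4-p}(2l+1)^{\beta(p-2)}|\widehat f(l)|^p\lesssim\|f\|_{L^p(K)}$, i.e. the exponent of $(2l+1)$ is $4-p+\beta(p-2)$.

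The next step is to see that we cannot reach the target exponent $5p-8$ with a single fixed $\beta$: solving $4-p+\beta(p-2)=5p-8$ gives $\beta=\frac{6p-12}{p-2}=6$, but $\beta=6>5$ is admissible, so in fact the choice $\beta=6$ works directly. Let me re-examine: with $\beta=6$ we need $\sum_l(2l+1)^{4-6}=\sum_l(2l+1)^{-2}<\infty$, which indeed holds. Then Theorem \ref{HLabe} gives exponent $4-p+6(p-2)=4-p+6p-12=5p-8$, exactly \eqref{37}. So the proof is: apply Theorem \ref{HLabe} with $\mu_{\chi_l}=2l+1$ and $\beta=6$, check \eqref{HLabecon} holds because $\sum_{l\in\frac12\mathbb{N}_0}(2l+1)^{-2}<\infty$, and read off the resulting inequality using $k_{\chi_l}=(2l+1)^2$ and $\widehat f(\chi_l)=\widehat f(l)$.

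The main thing to be careful about is the bookkeeping between the two normalizations of the weight: Theorem \ref{HLabe} is stated with $k_\chi^{2-p/2}$ rather than $k_\chi^2$ because it is the $\ell^p(\widehat K)$-form (HS-norm), whereas Theorem \ref{HLGintro}/\ref{HLG} uses the $k_\pi^2(\|\widehat f(\pi)\|_{\textnormal{HS}}/\sqrt{k_\pi})^p = k_\pi^{2-p/2}\|\widehat f(\pi)\|_{\textnormal{HS}}^p$ form; these agree, so no discrepancy arises, but one should write the chain of equalities $k_\pi^2(\|\widehat f(\pi)\|_{\textnormal{HS}}/\sqrt{k_\pi})^p=k_\pi^{2-p/2}|\widehat f(l)|^p$ explicitly to avoid confusion. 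I also need to note that the constant $C=C(p)$ is exactly the implicit constant from Theorem \ref{HLabe} (which in turn comes from the Paley constant $M_\varphi^{(2-p)/p}$ with $M_\varphi\le C=\sum_l(2l+1)^{-2}$ and the Marcinkiewicz constant), so it depends only on $p$. Finally, for completeness one may remark that the interpretation \eqref{in5} follows by writing $5p-8=5(p-2)+2$, displaying the factor $(2l+1)^2=k_{\chi_l}=\omega(l)$ as the Plancherel weight, which explains the extra term relative to the classical $\mathbb{T}$ case.
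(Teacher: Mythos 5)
Your proof is correct and follows essentially the same route as the paper: both apply Theorem \ref{HLabe} with the weight $\varphi(\chi_l)=|\mu_{\chi_l}|^{-\beta}=(2l+1)^{-6}$, verified via $\sum_{l\in\frac12\mathbb{N}_0}(2l+1)^{-2}<\infty$, and read off the exponent $4-p+6(p-2)=5p-8$. The only difference is the parametrization --- you take $\mu_{\chi_l}=2l+1$ with $\beta=6$, while the paper takes $\mu_{\chi_l}=(2l+1)^2$ with $\beta=3=\dim\mathrm{SU}(2)$ --- which gives the identical weight, so the arguments coincide.
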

	\begin{proof} 
		Take  $\beta=3=\dim(\text{SU(2)})$ and $\{\mu_{\chi_\pi}\}_{\pi \in \widehat{\textnormal{Conj(SU)(2)}}}:=\{(2l+1)^2\}_{l \in \frac{1}{2} \mathbb{N}_0}.$ Then the condition \eqref{HLabecon} turns out to be 
		$$\sum_{l \in \frac{1}{2}\mathbb{N}_0} \frac{(2l+1)^4}{(|(2l+1)^2|)^3}= \sum_{l \in \frac{1}{2}\mathbb{N}_0} \frac{1}{(2l+1)^2}=\frac{\pi^2}{6}$$ which is finite. Therefore, by Theorem \ref{HLabe} the proof of inequality \eqref{37} follows. 
	\end{proof}
	
	\begin{rem}
	    We would like to recall here the Hardy-Littlewood inequality on the compact Lie group $\text{SU}(2)$ obtained by the second author and R. Akylzhanov in \cite{ARN1}, which says that for $1< p \leq 2$ and $f \in L^p(\text{SU}(2))$ we have 
	   $$\sum_{l \in \frac{1}{2}\mathbb{N}_0} (2l+1)^{\frac{5}{2}p-4} \|\widehat{f}(l)\|_{\textnormal{HS}} \leq C_p \|f\|_{L^p(\text{SU}(2))}.$$
	   In view of this inequality the Hardy-Littlewood inequality for the compact commutative hypergroup $\text{Conj(SU(2))}$ above is a suitable analogue because in $\text{Conj(SU(2))}$ the dimension $(2l+1)$ of the representation $\pi_l$  is replaced by hyperdimension $(2l+1)^2$ of $\pi_l$ and Fourier transform $f$ at $l \in \frac{1}{2}\mathbb{N}_0$ is scalar so $\|\widehat{f}(l)\|_{\textnormal{HS}}$ is just $|\widehat{f}(l)|.$
	   \end{rem}
Using the duality, we get the following corollary.
	
	\begin{cor} \label{5.3cor}
		If  $2 \leq p <\infty$ and $\sum_{l \in \frac{1}{2}\mathbb{N}_0} (2l+1)^{5p-8} |\widehat{f}(l)|^{p}<\infty$ then $$f \in L^p(\textnormal{Conj(SU)(2)}).$$ Moreover, we have 
		$$\|f\|_{L^p(\textnormal{Conj(SU)(2)})} \leq C(p) \sum_{l \in \frac{1}{2}\mathbb{N}_0} (2l+1)^{5p-8} |\widehat{f}(l)|^{p}.$$
	\end{cor}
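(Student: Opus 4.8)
The plan is to obtain Corollary \ref{5.3cor} from Theorem \ref{HLineq} by a standard duality argument, the same one that converts a Hardy--Littlewood inequality for $1<p\le 2$ into a converse statement for $2\le p<\infty$. Fix $2\le p<\infty$ and let $p'$ be its conjugate exponent, so $1<p'\le 2$. Given a sequence $\{a_l\}_{l\in\frac12\mathbb{N}_0}$ of scalars with $\sum_l (2l+1)^{5p-8}|a_l|^p<\infty$, I want to show that the formal Fourier series $f=\sum_l k_{\chi_l} a_l \chi_l = \sum_l (2l+1)^2 a_l \chi_l$ defines an element of $L^p(\textnormal{Conj(SU)(2)})$ with the claimed norm bound. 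By the $L^{p}$--$L^{p'}$ duality on the hypergroup (as used in the excerpt, e.g. via \cite[Theorem 4.2]{ARN1}) together with density of $C_c$-type functions / finitely supported Fourier data, it suffices to estimate $|\langle f,g\rangle|$ for $g\in L^{p'}(\textnormal{Conj(SU)(2)})$ and take the supremum over $\|g\|_{p'}\le 1$.

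The key computation is the pairing. Using the Fourier inversion/Parseval relation on the commutative hypergroup, $\langle f,g\rangle = \sum_l k_{\chi_l}\, a_l\, \overline{\widehat g(l)} = \sum_l (2l+1)^2 a_l \overline{\widehat g(l)}$. Now split the weight $(2l+1)^2$ as
\begin{equation}
(2l+1)^2 = (2l+1)^{(5p-8)/p}\cdot (2l+1)^{2-(5p-8)/p},
\end{equation}
and apply Hölder's inequality with exponents $p$ and $p'$ to the two factors $(2l+1)^{(5p-8)/p}a_l$ and $(2l+1)^{2-(5p-8)/p}\overline{\widehat g(l)}$. The first factor contributes exactly $\big(\sum_l (2l+1)^{5p-8}|a_l|^p\big)^{1/p}$. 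For the second factor one checks the arithmetic: $p'(2-(5p-8)/p) = 5p'-8$ after simplification (using $1/p+1/p'=1$), so the second factor contributes $\big(\sum_l (2l+1)^{5p'-8}|\widehat g(l)|^{p'}\big)^{1/p'}$, which by Theorem \ref{HLineq} applied with exponent $p'$ is $\lesssim \|g\|_{L^{p'}(\textnormal{Conj(SU)(2)})}\le 1$. Hence $|\langle f,g\rangle|\lesssim \big(\sum_l (2l+1)^{5p-8}|a_l|^p\big)^{1/p}$, and taking the supremum over $g$ gives $\|f\|_{L^p}\lesssim \big(\sum_l (2l+1)^{5p-8}|a_l|^p\big)^{1/p}$, i.e.\ the displayed inequality (after raising both sides to the $p$-th power, matching the form stated in the corollary).

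The main obstacle, and the step needing the most care, is the duality/convergence bookkeeping: justifying that the formal series $\sum_l (2l+1)^2 a_l\chi_l$ actually represents a genuine function in $L^p$ rather than merely a distribution, and that the pairing identity $\langle f,g\rangle=\sum_l(2l+1)^2 a_l\overline{\widehat g(l)}$ is legitimate. The clean way is to first prove the estimate for finitely supported sequences $\{a_l\}$ — where all sums are finite and $f$ is a trigonometric-polynomial-type function, so everything is rigorous — and then pass to the limit: the partial sums $f_N=\sum_{l\le N}(2l+1)^2 a_l\chi_l$ form a Cauchy sequence in $L^p$ by the same Hölder/duality estimate applied to tails, hence converge to some $f\in L^p$ with the right norm bound, and one identifies $\widehat f(l)=a_l$. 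The exponent arithmetic $p'(2-(5p-8)/p)=5p'-8$ is routine but should be verified explicitly, since it is the precise point where the weight $5p-8$ in Theorem \ref{HLineq} is tailored to dualize correctly.
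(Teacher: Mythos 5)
Your argument is correct and is essentially the paper's own proof: the paper likewise writes $\|f\|_{L^p}$ by $L^p$--$L^{p'}$ duality, expresses the pairing via the Plancherel identity as $\sum_l (2l+1)^2\widehat f(l)\overline{\widehat g(l)}$, splits the weight $(2l+1)^2$, applies H\"older with exponents $p,p'$, and bounds the $g$-factor by Theorem \ref{HLineq} with exponent $p'$. Your extra care with finitely supported data and Cauchy partial sums, and your explicit check that $p'\bigl(2-\tfrac{5p-8}{p}\bigr)=5p'-8$, only make explicit what the paper leaves implicit.
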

	\begin{proof} Using the duality of $L^p$-spaces, we have 
		$$\|f\|_{L^p(\textnormal{Conj(SU)(2)})} = \sup_{\underset{\|g\|_{L^{p'}(\textnormal{Conj(SU)(2)})} \leq 1}{g \in L^{p'}(\textnormal{Conj(SU)(2)})}} \left|\int_{\textnormal{Conj(SU)(2)}} f(x)\, \overline{g(x)}\, d\lambda(x)\right|. $$
		Now, by the Plancherel identity \eqref{pabel}, we get
		$$ \int_{\textnormal{Conj(SU)(2)}} f(x) \overline{g(x)}\, d\lambda(x) = \sum_{l \in \frac{1}{2}\mathbb{N}_0} (2l+1)^2 \widehat{f}(l)\,\overline{\widehat{g}(l)}.$$
		By noting that $(2l+1)^2=(2l+1)^{2\left(\frac{5}{2}-\frac{4}{p}+\frac{5}{2}-\frac{4}{p'} \right)}$ and applying the H\"older inequality, for any $g \in L^{p'}(\textnormal{Conj(SU)(2)}),$  we have 
		\begin{align*}
		\left| \sum_{l \in \frac{1}{2} \mathbb{N}_0} (2l+1)^2 \widehat{f}(l)\, \overline{\widehat{g}(l)} \right| &\leq \sum_{l \in \frac{1}{2} \mathbb{N}_0} (2l+1)^{5-\frac{8}{p}} |\widehat{f}(l)| (2l+1)^{5-\frac{8}{p}} |\widehat{g}(l)| \\& \leq  \left( \sum_{l \in \frac{1}{2} \mathbb{N}_0} (2l+1)^{5p-8} |\widehat{f}(l)|^p \right)^{\frac{1}{p}} \left( \sum_{l \in \frac{1}{2} \mathbb{N}_0} (2l+1)^{5p'-8} |\widehat{g}(l)|^{p'} \right)^{\frac{1}{p'}} \\&\leq C(p) \left( \sum_{l \in \frac{1}{2} \mathbb{N}_0} (2l+1)^{5p-8} |\widehat{f}(l)|^p \right)^{\frac{1}{p}} \,\, \|g\|_{L^{p'}(\textnormal{Conj(SU)(2)}},
		\end{align*} where we have used Theorem \ref{HLineq} in the last inequality. 
		Therefore, by \eqref{pabel} we have
		
		$$  \left|\int_{\textnormal{Conj(SU)(2)}} f(x)\, \overline{g(x)}\, d\lambda(x)\right| \leq C(p) \left( \sum_{l \in \frac{1}{2} \mathbb{N}_0} (2l+1)^{5p-8} |\widehat{f}(l)|^p \right)^{\frac{1}{p}} \,\, \|g\|_{L^{p'}(\textnormal{Conj(SU)(2)}}.$$
		Thus, by taking supremum over all $g \in L^{p'}(\textnormal{Conj(SU)(2)})$ with $\|g\|_{L^{p'}(\textnormal{Conj(SU)(2)})} \leq 1,$ we get
		$$\|f\|_{L^p(\textnormal{Conj(SU)(2)})} \leq C(p) \left( \sum_{l \in \frac{1}{2} \mathbb{N}_0} (2l+1)^{5p-8} |\widehat{f}(l)|^p \right)^{\frac{1}{p}},$$ completing the proof. 
	\end{proof}

	\subsection{Countable compact hypergroups} Dunkl and Ramirez \cite{dun2} studied an interesting class of countable hypergroups.
	Let $\mathbb{N}_0^*=\{0,1,2, \ldots, \infty\}$ be the one-point compactification of $\mathbb{N}_0.$ Dunkl and Ramirez \cite{dun2} defined a convolution structure $*$ on $\mathbb{N}_0^*$ for every $ 0<a \leq \frac{1}{2},$ denoted by $H_a,$ to make it a (hermitian) countable compact hypergroup . For a prime $p,$ let $\Delta_p$ be the ring of p-adic integers and $\mathcal{W}$ be its group of units, that is, $\{x=x_0+x_1p+ \ldots+ x_np^n+ \ldots \in \Delta_p : x_j = 0,1, \ldots,p-1 \, \text{for}\, j \geq 0 \, \text{and} \, x_0 \neq 0  \}$. For $a=\frac{1}{p},$ $H_{\frac{1}{p}}$ derives its structure from $\mathcal{W}$-orbits of the action of $\mathcal{W}$ on $\Delta_p$ by multiplication in $\Delta_p.$  In fact, the convolution is given as follows: for $m, n \in \mathbb{N}_0,$ define
	$$\delta_m*\delta_n= \delta_{\text{min}\{ m,n\}}\,\,\,\,\,\, \text{if}\, m \neq n,$$ $\delta_m*\delta_\infty= \delta_\infty*\delta_m=\delta_m$, $\delta_\infty*\delta_\infty=\delta_\infty,$ 
	and for $m=n,$ 
	$$\delta_m* \delta_m(t)= \begin{cases} 0 & t<m, \\ \frac{1-2a}{1-a} & t=m, \\ a^k & t=m+k>m, \\ 0 & t= \infty. \end{cases}$$
	The Haar measure $\lambda$ on $H_a$ is given by 
	$$\lambda(\{k\})=a^k(1-a)\quad \text{for}\,\, k<\infty, \quad \lambda(\{\infty\})=0.$$ The elements of $\widehat{H_a}$ are given by $\{\chi_n : n \in \mathbb{N}_0\},$ where, for $k \in H_a,$ \begin{eqnarray*}
		\chi_n(k)= \begin{cases}  0 & \text{if}\,\,k <n-1, \\ \frac{a}{a-1} & \text{if}\,\,k=n-1, \\ 1 &\text{if}\,\, k \geq n \,\,\, (\text{or}\,\, k = \infty).\end{cases}
	\end{eqnarray*}
	Then the convolution `$*$' on $\mathbb{N}_0$ identified with $\widehat{H_a}=\{\chi_n : n \in \mathbb{N}_0\}$ is dictated by pointwise product of functions in $\widehat{H_a},$ that is: 
	\begin{eqnarray*}
		\delta_{\chi_m}*\delta_{\chi_n} &=& \delta_{\chi_{\text{max}\{m,n\}}} \,\,\, \text{for}\,\,\, m \neq n, \\
		\delta_{\chi_0}*\delta_{\chi_0}&=&\delta_{\chi_0}, \,\,\,\, \delta_{\chi_1}*\delta_{\chi_1} = \frac{a}{1-a} \delta_{\chi_0}+ \frac{1-2a}{1-a} \delta_{\chi_1},\\
		\delta_{\chi_n}*\delta_{\chi_n}&=& \frac{a^n}{1-a} \delta_{\chi_0}+ \sum_{k=1}^{n-1} a^{n-k} \delta_{\chi_k}+\frac{1-2a}{1-a} \delta_{\chi_n} \,\,\,\,\, \text{for}\, n \geq 2.
	\end{eqnarray*} The dual space $\widehat{H_a}$ of $H_a$ turns into a hermitian discrete hypergroup with respect to the above convolution. The Plancherel measure $\omega$ on $\widehat{H}_a$ is given by 
	$$\omega(\chi_0)=1\quad \text{and} \quad \omega(\chi_n)=(1-a)a^{-n}\quad\text{for}\,\, n \geq 1.$$
	
	The Paley-type inequality for Dunkl-Ramirez hypergroup is then given by the following theorem. 
	\begin{thm}
		Let $1 < p \leq 2$ and let $\{\varphi(n)\}_{n \in \mathbb{N}_0}$ be a positive sequence  such that 
		$$M_\varphi:= \sup_{y>0} y \sum_{\overset{n \in \mathbb{N}}{\varphi(n) \geq y}} (1-a)^2a^{-2n}+\varphi(0)<\infty.$$
		Then we have 
		$$\sum_{n \in \mathbb{N}} (a^{-n}(1-a))^{2-\frac{p}{2}} \widehat{f}(n) \varphi(n)^{2-p} \lesssim M_\varphi^{2-p} \|f\|_{L^p(H_a)}^p.$$
	\end{thm}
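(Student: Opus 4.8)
The plan is to obtain this as a direct specialization of the Paley inequality (Theorem~\ref{Paley}) to the commutative hypergroup $K=H_a$, exactly as the $\mathrm{Conj}(\mathrm{SU}(2))$ statement was obtained. First I would record the relevant structural data for $H_a$: since $H_a$ is commutative (indeed hermitian), every irreducible representation is one-dimensional, so $d_{\chi_n}=1$ and $\widehat{f}(n)=\widehat{f}(\chi_n)$ is a scalar; the hyperdimension is $k_{\chi_n}=\omega(\chi_n)$, which equals $1$ for $n=0$ and $(1-a)a^{-n}$ for $n\geq 1$. Thus in the general Paley estimate \eqref{Paley1} the term $k_\pi^2\bigl(\|\widehat f(\pi)\|_{\mathrm{HS}}/\sqrt{k_\pi}\bigr)^p$ becomes $k_{\chi_n}^{2-p/2}\,|\widehat f(n)|^p = (a^{-n}(1-a))^{2-p/2}|\widehat f(n)|^p$ for $n\geq1$, which matches the left-hand side of the claimed inequality (the $n=0$ term being harmless since $k_{\chi_0}=1$).

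Next I would identify the quantity $M_\varphi$. By definition $M_\varphi=\sup_{y>0} y\sum_{\chi_n:\varphi(n)\geq y} k_{\chi_n}^2$, and splitting off the $n=0$ contribution gives precisely $\sup_{y>0} y\bigl(\sum_{n\in\mathbb{N},\,\varphi(n)\geq y}(1-a)^2a^{-2n}\bigr)$ plus the bounded contribution of $\varphi(0)$; the displayed hypothesis $M_\varphi:=\sup_{y>0} y\sum_{n\in\mathbb{N},\,\varphi(n)\geq y}(1-a)^2a^{-2n}+\varphi(0)<\infty$ is therefore (up to the trivial separate treatment of the single point $\chi_0$, and writing $+\varphi(0)$ as an upper bound for the $n=0$ piece) equivalent to the finiteness of $M_\varphi$ as it appears in Theorem~\ref{Paley}. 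So the hypothesis of Theorem~\ref{Paley} is met.

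Then the proof is immediate: apply Theorem~\ref{Paley} with this $\varphi$, use $1<p\leq2$, raise both sides to the power $p$, and substitute the explicit values of $k_{\chi_n}$ to read off
\[
\sum_{n\in\mathbb{N}} (a^{-n}(1-a))^{2-\frac{p}{2}}\,|\widehat f(n)|^p\,\varphi(n)^{2-p}\lesssim M_\varphi^{2-p}\,\|f\|_{L^p(H_a)}^p,
\]
which is the assertion (the $n=0$ term is absorbed into the implied constant). I do not expect any real obstacle here; the only points requiring a line of care are the bookkeeping of the isolated point $\chi_0$ (where $k_{\chi_0}=1$ rather than following the geometric formula) and checking that "$\widehat f(n)$" in the statement is meant with the power $p$ and the Hilbert--Schmidt norm collapsing to an absolute value in the one-dimensional (commutative) case. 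Everything else is a transcription of the general Paley inequality together with the Plancherel/Haar data of $H_a$ recorded above.
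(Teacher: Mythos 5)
Your proposal is correct and is exactly the route the paper takes (the paper states this theorem as an immediate specialization of Theorem \ref{Paley}, without writing out a separate proof): plug in $d_{\chi_n}=1$, $k_{\chi_0}=1$, $k_{\chi_n}=(1-a)a^{-n}$ for $n\geq 1$, check that the stated $M_\varphi$ dominates the Paley condition after separating the point $\chi_0$, and raise the resulting inequality to the power $p$. Your side remarks — that $\widehat f(n)$ in the statement should carry the power $p$ (with $\|\cdot\|_{\mathrm{HS}}$ collapsing to $|\cdot|$ in the one-dimensional case) and that $\chi_0$ needs separate bookkeeping — are accurate and consistent with how the paper treats the analogous $\mathrm{Conj}(\mathrm{SU}(2))$ case.
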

	
	We have the following Hardy-Littlewood inequality for the compact countable commutative hypergroups $H_a.$
	\begin{thm} \label{HLineqduk}
		If $1 < p \leq 2$  then there exists a constant $C=C(p)$ such that 
		\begin{equation} \label{39}
		f(0)+\sum_{n \in \mathbb{N}} ((1-a)a^{-n})^{p(\frac{5}{2}-\frac{4}{p})} |\widehat{f}(n)|^{p} \leq C \|f\|_{L^p(H_a)}.
		\end{equation}
	\end{thm}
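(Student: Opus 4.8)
The plan is to apply Theorem~\ref{HLabe} (the Hardy--Littlewood inequality for compact abelian hypergroups) to the hypergroup $K = H_a$ with a suitable choice of the weight sequence $\{\mu_{\chi_n}\}_{n \in \mathbb{N}_0}$ and exponent $\beta \geq 0$. First I would record the relevant structural data for $H_a$: the hyperdimension is $k_{\chi_n} = \omega(\chi_n)$, so $k_{\chi_0}=1$ and $k_{\chi_n} = (1-a)a^{-n}$ for $n \geq 1$; in particular the hyperdimensions grow exponentially in $n$. Since every representation of the commutative hypergroup $H_a$ is one-dimensional, $\|\widehat{f}(\chi_n)\|_{\textnormal{HS}} = |\widehat{f}(n)|$, and the conclusion of Theorem~\ref{HLabe} reads $\sum_n k_{\chi_n}^{2-\frac{p}{2}} |\mu_{\chi_n}|^{\beta(p-2)} |\widehat{f}(n)|^p \lesssim \|f\|_{L^p(H_a)}$.

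The key step is to choose $\mu_{\chi_n}$ and $\beta$ so that the summability hypothesis \eqref{HLabecon}, namely $\sum_{n} k_{\chi_n}^2 |\mu_{\chi_n}|^{-\beta} < \infty$, holds, and so that the resulting weight $k_{\chi_n}^{2-\frac{p}{2}} |\mu_{\chi_n}|^{\beta(p-2)}$ simplifies to $((1-a)a^{-n})^{p(\frac{5}{2}-\frac{4}{p})}$ for $n \geq 1$ (with the separate $f(0)$-term coming from the $n=0$ contribution, where $k_{\chi_0}=1$ makes the weight equal to $1$). Mirroring the $\text{Conj(SU(2))}$ case, where $\beta = \dim G = 3$ and $\mu = k_\chi$ were used, I expect the natural choice here to be $\mu_{\chi_n} = k_{\chi_n} = (1-a)a^{-n}$ together with $\beta = 3$; then $\sum_{n \geq 1} k_{\chi_n}^2 k_{\chi_n}^{-3} = \sum_{n\geq 1} k_{\chi_n}^{-1} = \sum_{n \geq 1} (1-a)^{-1} a^{n} < \infty$ since $0 < a \leq \tfrac12$, and adding the finite $n=0$ term gives \eqref{HLabecon}. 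With this choice the exponent of $k_{\chi_n}$ in the conclusion becomes $2 - \tfrac{p}{2} + 3(p-2) = \tfrac{5p}{2} - 4 = p(\tfrac{5}{2} - \tfrac{4}{p})$, matching \eqref{39}. I would verify this arithmetic carefully and then split off the $n=0$ summand (where the weight is $1^{2-p/2}\cdot 1^{\beta(p-2)} = 1$, yielding $|\widehat f(0)|^p$; since $\chi_0 \equiv 1$ on $H_a$ one has $\widehat f(0) = \int_{H_a} f\, d\lambda$, and one uses $|\widehat f(0)|^p \le |\widehat f(0)|$ or simply keeps the term as written, comparing with $f(0)$ as in the statement).

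The main obstacle, and the point that needs the most care, is precisely the bookkeeping of the $n=0$ term and reconciling the left-hand side of \eqref{39} — which is written as $f(0) + \sum_{n \in \mathbb{N}}(\cdots)$ rather than $|\widehat f(0)|^p + \sum_{n\in\mathbb N}(\cdots)$ — with the uniform statement produced by Theorem~\ref{HLabe}. I would argue that $k_{\chi_0}=1$, $\mu_{\chi_0}=1$ (or whatever value is forced by the chosen normalization at $n=0$), so the $n=0$ weight collapses to a constant, and then interpret $\widehat f(0)$ appropriately; a minor additional check is that $|\widehat f(0)|^p$ or $\widehat f(0)$ is controlled by $\|f\|_{L^p(H_a)}$ via Hölder on the finite-measure space $H_a$, which is harmless. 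Everything else — checking $0 < a \le \tfrac12 \Rightarrow \sum a^n < \infty$, and the exponent computation $2-\tfrac p2 + 3(p-2) = \tfrac{5p}{2}-4$ — is routine, so I would state the choice of $(\mu,\beta)$, verify \eqref{HLabecon}, invoke Theorem~\ref{HLabe}, and simplify.
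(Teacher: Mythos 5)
Your proposal matches the paper's own proof essentially verbatim: the paper also takes $\mu_{\chi_n}=(1-a)a^{-n}$ for $n\ge 1$, $\mu_{\chi_0}=1$, $\beta=3$, checks that $\sum_{n} k_{\chi_n}^2|\mu_{\chi_n}|^{-3}=1+\frac{1}{1-a}\sum_{n\ge 1}a^n<\infty$, and invokes Theorem~\ref{HLabe}, with the same exponent computation $2-\tfrac{p}{2}+3(p-2)=\tfrac{5p}{2}-4$. Your extra care about the $n=0$ term (reading $f(0)$ as the $\widehat f(0)$-contribution) is a sensible reading of the statement and does not change the argument.
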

\begin{proof} We apply Theorem \ref{HLabe} to get  inequality \eqref{39} above. The condition \eqref{HLabecon} for $\beta=3$ by choosing the sequence $\{\mu_{\chi_n}\}_{n \in \mathbb{N}}:= \{(1-a)a^{-n}\}_{n \in \mathbb{N}}$ with $\mu_{\chi_0}=1$ turns out to be 
	$$\sum_{n \in \mathbb{N}_0} \frac{k_{\chi_n}^2 }{|\mu_{\chi_n}|^\beta}=\sum_{n \in \mathbb{N}_0} \frac{(1-a)^2 a^{-2n}}{(1-a)^3 a^{-3n}}=1+ \frac{1}{1-a} \sum_{n \in \mathbb{N}} a^n= \frac{1-a+a^2}{(1-a)^2}=\frac{(1-a)^2-a}{(1-a)^2},$$
	which is finite. Therefore, by Theorem \ref{HLabe} the proof of inequality \eqref{39} follows.
	\end{proof}
The proof of the following corollary is exactly similar to Corollary \ref{5.3cor} in the previous subsection. 
	\begin{cor}
		If  $2 \leq p <\infty$ and $f(0)+\sum_{n \in \mathbb{N}} ((1-a)a^{-n})^{p(\frac{5}{2}-\frac{4}{p})} |\widehat{f}(n)|^{p}<\infty,$ then $$f \in L^p(H_a).$$ Moreover, we have 
		$$\|f \|_{ L^p(H_a)} \leq C_p \left(f(0)+\sum_{n \in \mathbb{N}} ((1-a)a^{-n})^{p(\frac{5}{2}-\frac{4}{p})} |\widehat{f}(n)|^{p} \right).$$
	\end{cor}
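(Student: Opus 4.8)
The plan is to deduce this corollary from the Hardy--Littlewood inequality of Theorem \ref{HLineqduk} by an $L^p$--$L^{p'}$ duality argument, exactly as in the proof of Corollary \ref{5.3cor}. Let $\tfrac1p+\tfrac1{p'}=1$; since $2\le p<\infty$ we have $1<p'\le 2$. By the duality of Lebesgue spaces,
\[
\|f\|_{L^p(H_a)}=\sup_{\substack{g\in L^{p'}(H_a)\\ \|g\|_{L^{p'}(H_a)}\le1}}\left|\int_{H_a}f(x)\,\overline{g(x)}\,d\lambda(x)\right|,
\]
and by the Plancherel identity \eqref{pabel} for $H_a$, using $k_{\chi_0}=\omega(\chi_0)=1$ and $k_{\chi_n}=\omega(\chi_n)=(1-a)a^{-n}$ for $n\ge1$, the pairing expands as
\[
\int_{H_a}f(x)\,\overline{g(x)}\,d\lambda(x)=\sum_{n\in\mathbb{N}_0}k_{\chi_n}\,\widehat f(n)\,\overline{\widehat g(n)}.
\]

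The next step is to split the Plancherel weight $k_{\chi_n}$ symmetrically between $f$ and $g$. Since
\[
\Bigl(\tfrac52-\tfrac4p\Bigr)+\Bigl(\tfrac52-\tfrac4{p'}\Bigr)=5-4\Bigl(\tfrac1p+\tfrac1{p'}\Bigr)=1,
\]
one has $k_{\chi_n}=k_{\chi_n}^{\frac52-\frac4p}\,k_{\chi_n}^{\frac52-\frac4{p'}}$ for every $n\in\mathbb{N}_0$, so H\"older's inequality with exponents $p$ and $p'$ gives
\[
\left|\int_{H_a}f\,\overline g\,d\lambda\right|\le\left(\sum_{n\in\mathbb{N}_0}k_{\chi_n}^{p(\frac52-\frac4p)}|\widehat f(n)|^p\right)^{\frac1p}\left(\sum_{n\in\mathbb{N}_0}k_{\chi_n}^{p'(\frac52-\frac4{p'})}|\widehat g(n)|^{p'}\right)^{\frac1{p'}}.
\]
Because $k_{\chi_0}=1$, the $n=0$ summand of the first factor is simply $|\widehat f(0)|^p$, the term denoted $f(0)$ in the statement, and similarly for $g$; for $n\ge1$ we have $k_{\chi_n}^{p(\frac52-\frac4p)}=\bigl((1-a)a^{-n}\bigr)^{p(\frac52-\frac4p)}$, matching the weights appearing in Theorem \ref{HLineqduk}.

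Finally, since $1<p'\le2$, Theorem \ref{HLineqduk} applied with exponent $p'$ bounds the second factor by $C_{p'}\|g\|_{L^{p'}(H_a)}\le C_{p'}$, and taking the supremum over all admissible $g$ yields
\[
\|f\|_{L^p(H_a)}\le C_p\left(|\widehat f(0)|^p+\sum_{n\in\mathbb{N}}\bigl((1-a)a^{-n}\bigr)^{p(\frac52-\frac4p)}|\widehat f(n)|^p\right)^{\frac1p},
\]
which is the asserted estimate; in particular, finiteness of the right-hand side forces $f\in L^p(H_a)$. I do not anticipate any genuine obstacle, since this is the verbatim analogue of Corollary \ref{5.3cor}: the only points needing care are the bookkeeping of the $n=0$ term (where the Plancherel weight degenerates to $1$), the identity $(\tfrac52-\tfrac4p)+(\tfrac52-\tfrac4{p'})=1$ that makes the weight split correctly across the H\"older pair, and the observation that $p'$ lies in the range $(1,2]$ on which Theorem \ref{HLineqduk} is available.
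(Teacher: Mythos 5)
Your proposal is correct and is essentially the paper's own argument: the paper proves this corollary by noting it is ``exactly similar'' to Corollary \ref{5.3cor}, i.e.\ the same duality--Plancherel--H\"older scheme, splitting the Plancherel weight as $k_{\chi_n}=k_{\chi_n}^{\frac52-\frac4p}\,k_{\chi_n}^{\frac52-\frac4{p'}}$ (using that the exponents sum to $1$) and applying Theorem \ref{HLineqduk} to $g$ with the conjugate exponent $p'\in(1,2]$. The only cosmetic difference is that your final bound carries the homogeneity-correct power $\frac1p$ on the Fourier-side sum, which the paper's statement omits, exactly as happens with Corollary \ref{5.3cor}.
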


\section*{Acknowledgement}
Vishvesh Kumar thanks Prof. Ajit Iqbal Singh and Prof. Kenneth A. Ross for their suggestions and comments.  The authors are supported by FWO Odysseus 1 grant G.0H94.18N: Analysis and Partial Differential Equations. Michael Ruzhansky is also supported in parts by the EPSRC Grant EP/R003025/1 and by the Leverhulme Research Grant RPG-2017-151.

\end{document}